\DeclareSymbolFont{cyrletters}{OT2}{wncyr}{m}{n}
\DeclareMathSymbol{\Sha}{\mathalpha}{cyrletters}{"58}
\newtheorem{theorem}{Theorem}[section]
\newtheorem{lemma}[theorem]{Lemma}
\newtheorem{corollary}[theorem]{Corollary}
\newtheorem{definition}[theorem]{Definition}
\newtheorem{proposition}[theorem]{Proposition}
\theoremstyle{remark}
\newtheorem{remark}[theorem]{Remark}
\begin{document}
\title{\textsc{Iwasawa Main Conjecture for Heegner Points: Supersingular Case }}

\author{\textsc{Francesc Castella and Xin Wan}}
\date{}

\maketitle
\begin{abstract}
In this paper we propose and prove an anticyclotomic Iwasawa main conjecture for Heegner points on supersingular elliptic curves such that $a_p=0$. The result has a ``$\pm$'' nature in the sense of Kobayashi. The proof uses a recent work of the second author on one divisibility of Iwasawa-Greenberg main conjecture for Rankin-Selberg $p$-adic $L$-functions, and an argument of Howard (adapted to our ``$\pm$''-situation). As a byproduct we also prove an improvement of C.Skinner's result on a converse of Gross-Zagier-Kolyvagin theorem.
\end{abstract}
\section{Introduction}
\subsection{Assumptions}
Let $p$ be an odd prime and $E$ an elliptic curve over $\mathbb{Q}$ having square-free conductor $N$ and supersingular reduction at $p$. By work of Wiles \cite{Wiles95} there is a cuspidal newform $f=\sum_{i=1}^\infty a_nq^n$ associated to $E$. Let $A:=E[p^\infty]$ and $T$ being its Tate module. Let $V=T\otimes_{\mathbb{Z}_p}\mathbb{Q}_p$. Assume $a_p=0$. Let $\mathcal{K}$ be a quadratic imaginary field with absolute Galois group $G_\mathcal{K}$, such that $p$ is split and $\mathrm{Im}(G_\mathcal{K})=\mathrm{Aut}(T)\simeq \mathrm{GL}_2(\mathbb{Z}_p)$ (regarding $T$ as a Galois representation over $G_\mathcal{K}$). We write $\epsilon$ for the global root number of $E$ over $\mathbb{Q}$. As in \cite{WAN} we make either of the following assumptions
\begin{itemize}
\item[(1)] there is at least one prime $q|N$ non-split in $\mathcal{K}$ and that the global root number of $E$ over $\mathcal{K}$ is $-1$;
\item[(2)] for each $q|N$ either $q$ is split in $\mathcal{K}$, or $q$ is ramified and $E$ has non-split multiplicative reduction at $q$, and suppose we have at least one such ramified prime. This implies that the root number of $E$ over $\mathcal{K}$ is $-1$.
\end{itemize}
We define $\mathcal{K}_\infty$ as the unique $\mathbb{Z}_p^2$-extension of $\mathcal{K}$ unramified outside $p$ and let $\Gamma_\mathcal{K}$ be the Galois group. Let $\Gamma^{\pm}$ be the subgroup such that complex conjugation acts by $\pm1$ and $\gamma^\pm$ be their topological generators. Let $\mathcal{K}^{\pm}_\infty$ be the fixed fields for $\Gamma^{\mp}$. Let $\Lambda_\mathcal{K}=\mathbb{Z}_p[[\Gamma_\mathcal{K}]]$ and $\Lambda=\mathbb{Z}_p[[\Gamma^-]]$. We define a $\mathbb{Z}_p$-automorphism $\iota$ of $\Lambda$ by sending $\gamma^-$ to $\frac{1}{\gamma^-}$. Let $\Psi$ be the character $\Gamma_\mathcal{K}\hookrightarrow\Lambda_\mathcal{K}^\times$ or $\Gamma\hookrightarrow \Lambda^\times$ and $\varepsilon$ being $\Psi\circ \mathrm{rec}$ for $\mathrm{rec}$ being the reciprocity map in class field theory (normalized by the geometric Frobenius). Let $\mathbf{T}$ be $T\otimes_{\mathbb{Z}_p}\Lambda(\Psi)$ and $\mathbf{A}$ being $T\otimes\Lambda(-\Psi)\otimes_{\Lambda}\Lambda^*$ with $\Lambda^*$ the Pontryagin dual of $\mathbf{T}$. We fix an $\iota_p: \mathbb{C}\simeq \mathbb{C}_p$ and let $p=v\bar{v}$ in $\mathcal{K}$ with $v$ being the place determined by $\iota_p$.
\subsection{Selmer Groups}

\noindent\underline{Selmer Conditions}:\\
\noindent We define some Selmer conditions ($\mathcal{F}^+,\mathcal{F}_v^+,\mathcal{F}_{\bar{v}}^+, v, \bar{v}, \mathrm{str}^+)$ for the Galois cohomology of $\mathcal{K}$ or finite extensions of it inside $\mathcal{K}_\infty$ of $V$ (in the following we write for $\mathcal{K}$ for example). We are going to define $p$-adic ``$+$'' local conditions $E^+(L)$ for $p$-adic fields $L$ in the text.
Let:
$$(\mathcal{F}^+)_w=\left\{\begin{array}{ll}\mathrm{ker}\{H^1(\mathcal{K}_w,V)\rightarrow H^1(I_w,V)\}, &w\nmid p\\ E^+(\mathcal{K}_w)\otimes_{\mathbb{Z}_p}\mathbb{Q}_p& w|p\end{array}\right.$$
$$(\mathcal{F}_v^+)_w=\left\{\begin{array}{ll}\mathcal{F}_w^+, &w\not=\bar{v}\\0& w=\bar{v}\end{array}\right.$$
$$(\mathcal{F}_{\bar{v}}^+)_w=\left\{\begin{array}{ll}\mathcal{F}_w^+, &w\not={v}\\0& w={v}\end{array}\right.$$
$$(v)_w=\left\{\begin{array}{ll}\mathcal{F}_w^+, &w\nmid p\\0& w=\bar{v}\\ H^1(\mathcal{K}_w,V)& w=v\end{array}\right.$$
$$(\bar{v})_w=\left\{\begin{array}{ll}\mathcal{F}_w^+, &w\nmid p\\0& w={v}\\ H^1(\mathcal{K}_w,V)& w=\bar{v}\end{array}\right.$$
$$(str)_w^+=\left\{\begin{array}{ll}\mathcal{F}_w^+, &w\nmid p\\0& w|p\end{array}\right.$$
We define the local Selmer conditions at primes not dividing $p$ for the Galois cohomology groups for $T$ or $A$ to be the inverse image or inverse image of the corresponding Selmer conditions for $V$. At $w|p$ we define the local Selmer condition at $w$ for $A$ to be $E^+(\mathcal{K}_w)\otimes_{\mathbb{Z}_p}\mathbb{Q}_p/\mathbb{Z}_p$ and define $H^1_+(\mathcal{K}_v, T)$ to be the orthogonal of $E^+(\mathcal{K}_w)\otimes_{\mathbb{Z}_p}\mathbb{Q}_p$ under local Tate pairing. We also define the local Selmer conditions for $\mathbf{T}$ and $\mathbf{A}$ at primes above $p$ by regarding them as projective or injective limits and taking the images in the limits.
We define the corresponding Selmer groups $H^1_{\mathcal{F}^+}(\mathcal{K},-)$ to be the inverse image in $H^1(\mathcal{K},-)$ of $\prod_w \mathcal{F}_w^+$ under the localization map and $$X^+:=H^1_{\mathcal{F}^+}(\mathcal{K},\mathbf{A})^*.$$ We sometimes write $H^1_+$ for $H^1_{\mathcal{F}^+}$. We also define Selmer groups for other Selmer conditions in a similar way. We often write $*$ for the orthogonal complement under local Tate pairing (for example for $\mathcal{F}^+$ we write $\mathcal{F}^{+*}$ for its orthogonal complement). Finally we write $\mathrm{rel}$ for the ``relaxed Selmer condition'' which is the same as $\mathcal{F}^+$ outside $p$ and puts no restriction at primes dividing $p$. We sometimes omit the subscript $\mathrm{rel}$ when writing the corresponding Selmer groups.\\

\subsection{Main Result}
In this paper we only prove the ``$+$'' main conjectures for simplicity. The ``$-$'' one can be proved in completely same way. We are going to construct a family of Heegner points $\kappa^+_1\in H^1_{\mathcal{F}^+}(\mathcal{K}, \mathbf{T})$ in the next section. Our main theorem is
\begin{theorem}\label{main Theorem}
There is a quasi-isomorphism $X^+\sim \Lambda \oplus X^+_{\mathrm{tor}}$, where $X_{\mathrm{tor}}^+$ is the torsion part of $X^+$. Moreover
\begin{itemize}
\item[(1)]
Under assumption (2) then for any height one prime $P$ of $\Lambda$,
$$\mathrm{lg}_PX_\mathrm{tor}^+=2\mathrm{lg}_P(\frac{H^1_+(\mathcal{K}, T\otimes\Lambda(\Psi))}{\Lambda\kappa_1^+}).$$
Here $\mathrm{lg}_P(M)$ means the length of $M_P$ as a $\Lambda_P$ module.
\item[(2)]
Under assumption (1) the above is true for all height one primes $P\not=(p)$.
\end{itemize}
\end{theorem}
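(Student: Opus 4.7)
The plan is to follow Howard's strategy for the two-variable Heegner point main conjecture, adapted to the supersingular $\pm$-setting of Kobayashi. The argument divides naturally into three pieces, corresponding to the structure result and the two divisibilities.

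First, I would establish the structure theorem $X^+ \sim \Lambda \oplus X^+_{\mathrm{tor}}$ by determining the generic $\Lambda$-rank of $X^+$. The global Euler--Poincar\'e characteristic formula together with Poitou--Tate duality relate $H^1_{\mathcal{F}^+}(\mathcal{K}, \mathbf{A})$ to its dual; because the $+$-condition at $v$ and $\bar v$ is self-orthogonal under local Tate pairing (the analogue in the supersingular case of the Lagrangian nature of the ordinary local condition), the associated Selmer complex is self-dual in the sense of Nekov\'a\v{r}. The generic corank of $X^+$ being exactly $1$ then follows from the nontriviality of $\kappa^+_1$, itself a consequence of an explicit reciprocity law combined with the nonvanishing of the relevant Rankin--Selberg $p$-adic $L$-function at a suitable specialization.

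Second, I would prove the upper bound
$$\mathrm{lg}_P X^+_{\mathrm{tor}} \le 2\,\mathrm{lg}_P\!\left(\frac{H^1_+(\mathcal{K}, T\otimes\Lambda(\Psi))}{\Lambda \kappa^+_1}\right)$$
by an Euler system / Kolyvagin system argument. Starting from Heegner points on the ring class tower of $\mathcal{K}$, I would form Kolyvagin-type derivative classes $\kappa^+_n$, verify the appropriate compatibility relations at Kolyvagin primes, and then run Howard's inductive descent on the characteristic ideal of $X^+_{\mathrm{tor}}$. A key technical lemma is that the derivative classes satisfy the $+$-local condition at primes above $p$; this requires a careful analysis of the action of local Galois on Kobayashi's $E^+(\mathcal{K}_w)$ and of the local image of the Heegner classes.

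Third, I would invoke the second author's one-sided divisibility \cite{WAN} in the Iwasawa--Greenberg main conjecture for Rankin--Selberg $p$-adic $L$-functions, combined with an explicit reciprocity law of Kobayashi--Bertolini--Darmon--Prasanna type identifying the image of $\kappa^+_1$ under $v$-localization and the dual exponential with the anticyclotomic $\pm$ $p$-adic $L$-function up to a unit. This converts the Iwasawa--Greenberg bound into the reverse inequality
$$\mathrm{lg}_P X^+_{\mathrm{tor}} \ge 2\,\mathrm{lg}_P\!\left(\frac{H^1_+(\mathcal{K}, T\otimes\Lambda(\Psi))}{\Lambda \kappa^+_1}\right),$$
and combining the two bounds yields equality at every height-one prime $P$ under assumption (2), and at every $P \ne (p)$ under assumption (1); the exclusion in case (1) reflects that Wan's divisibility is only known up to a possible $\mu$-invariant contribution at $(p)$ in that setting.

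The main obstacle is the second step. Howard's original ordinary argument exploits decisively that the good local condition at $p$ is realized by a genuine Galois subrepresentation of $\mathbf{T}$, which makes local behaviour of Heegner classes and their derivatives essentially formal. Here the $+$-condition is defined through Kobayashi's $E^+(\mathcal{K}_w)$, which is not a subrepresentation, so the local triangulation of Heegner classes and the Kolyvagin derivative formula must be reproved in the $\pm$-framework, and one must verify by hand that the derived classes remain in $H^1_+$. Once this $\pm$-adapted Kolyvagin machinery is in place, the descent proceeds as in Howard's paper and yields the stated length equality.
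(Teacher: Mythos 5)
Your overall architecture matches the paper's: a $\pm$-adapted Heegner-point Kolyvagin system in the style of Howard gives the structure result and the upper bound, and the second author's one-sided divisibility in the two-variable Rankin--Selberg main conjecture, together with an explicit reciprocity law of the shape $(\mathrm{LOG}^+_v(\kappa_1^+))^2=(\mathcal{L}^-_{f,\mathcal{K}})$, gives the lower bound; your explanation of why $P=(p)$ is excluded under assumption (1) is also the right one. One smaller point: the nontriviality of $\kappa_1^+$ is obtained in the paper from Cornut--Vatsal together with the freeness of $H^1(\mathcal{K}[S],T\otimes\Lambda(\Psi))$ over $\Lambda$; deducing it from ``explicit reciprocity plus nonvanishing of the $p$-adic $L$-function'' risks circularity, since the anticyclotomic nonvanishing results are themselves of Cornut--Vatsal type.

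More seriously, your third step has a genuine gap. Wan's divisibility concerns the Greenberg-type dual Selmer group $X_{f,\mathcal{K},v}$ (relaxed at $v$, strict at $\bar v$, over the two-variable algebra), not $X^+$, and the passage from one to the other is not a formal ``conversion''. It requires (i) a control theorem specializing the two-variable Selmer group to the anticyclotomic line under $\gamma^+\mapsto 1$, and (ii) a bookkeeping with two Poitou--Tate exact sequences comparing the conditions $\mathcal{F}^+$, $\mathcal{F}^+_v$ and the $v$-relaxed condition, whose pivot is the comparison (the paper's Proposition \ref{3.9}) between the cokernel of the localization map $H^1_+(\mathcal{K},\mathbf{T})_P\rightarrow H^1_+(\mathcal{K}_v,\mathbf{T})_P$ and the cokernel, localized at $P^\iota$, of the dual map $\Lambda\simeq\varinjlim_n(E^+(\mathcal{K}_{n,v})\otimes\mathbb{Q}_p/\mathbb{Z}_p)^*\rightarrow(\mathrm{Sel}^+_{\mathrm{cofree}})^*$. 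In the ordinary case this comparison is proved using that the local condition at $p$ is cut out by a genuine subrepresentation of $\mathbf{T}$; in the $\pm$ setting it is precisely this step --- not the Kolyvagin-system step you flag as the main obstacle --- that the paper singles out as ``the key of the whole argument'' and proves by a new method, via $\omega_n^{\pm}$-truncations of $\mathrm{Sel}^+$, the identity (\ref{equation (3)}), and approximation by auxiliary primes $\mathfrak{D}=(g_P+p^m)$. Your proposal correctly raises the non-subrepresentation issue for the derivative classes, but leaves this duality/transfer step unaddressed, and without it the asserted lower bound for $\mathrm{lg}_PX^+_{\mathrm{tor}}$ does not follow from the Iwasawa--Greenberg divisibility.
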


\noindent Our theorem is proved by studying the relations between two different kinds of Selmer groups via Heegner points and their explicit reciprocity laws. These results were previously obtained in \cite{Howard} and \cite{WAN} for ordinary elliptic curves. In the supersingular case the arguments are more complicated since the local theory are of a quite different nature (actually a $\pm$-theory as in \cite{Kobayashi}). Such idea is also used by the second author to prove the cyclotomic main conjecture of Kobayashi in the supersingular case via explicit reciprocity law for Beilinson-Flach elements. We also remark that all these $\pm$-main conjectures are quite difficult to attack directly (especially the lower bounds for Selmer groups. See \cite{WAN1} for details). While the paper is being written, we learned that Longo and Vigni \cite{LV} worked on similar directions and they proved the $\pm$ dual Selmer module is rank one over $\Lambda$. The method they employed is different from ours, and we do not assume $\mathcal{K}_\infty^-/\mathcal{K}$ is totally ramified at $p$ as in \emph{loc.cit}.\\

\noindent We also prove a stronger form of the converse of theorem of Gross-Zagier and Kolyvagin by Skinner \cite{Skinner} as a corollary. It follows from Theorem \ref{main Theorem} in the same way as \cite[Theorem 1.7]{WAN}.
\begin{corollary}\label{Corollary}
Let $E/\mathbb{Q}$ be an elliptic curve with square-free conductor $N$ and supersingular reduction at $p$ with $a_p=0$. Let $\mathcal{K}$ be an imaginary quadratic field such that $p$ is split and such that $\mathrm{Im}(G_\mathcal{K})=\mathrm{Aut}(T)\simeq \mathrm{GL}_2(\mathbb{Z}_p)$. Suppose moreover assumption (1) is true. If the Selmer group $H_{\mathcal{F}}^1(\mathcal{K}, E[p^\infty])$ has corank one, then the Heegner point $\kappa_1$ is not torsion and thus the vanishing order of $L_\mathcal{K}(f,1)$ is exactly one.
\end{corollary}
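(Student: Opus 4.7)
The plan is to derive Corollary \ref{Corollary} from Theorem \ref{main Theorem} by the same mechanism as \cite[Theorem 1.7]{WAN}, with the ordinary Selmer theory of \emph{loc.\ cit.}\ replaced by Kobayashi's supersingular plus-theory. The core path is: translate the corank-one hypothesis on $H^1_{\mathcal{F}}(\mathcal{K},E[p^\infty])$ into a statement that the augmentation ideal of $\Lambda$ does not divide the characteristic ideal of $X^+_{\mathrm{tor}}$; propagate this through the main theorem to force $\kappa^+_1$ to generate $H^1_+(\mathcal{K},T\otimes\Lambda(\Psi))$ after localizing at that prime; specialize at the trivial character to conclude $\kappa_1$ is non-torsion; then invoke Gross-Zagier.

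Write $\mathfrak{a}\subset\Lambda$ for the augmentation ideal, which under $\Lambda\simeq\mathbb{Z}_p[[T]]$ is the height-one prime $(T)$. First I would set up a Mazur-style control theorem comparing $X^+/\mathfrak{a}X^+$ with the Pontryagin dual of $H^1_{\mathcal{F}^+}(\mathcal{K},E[p^\infty])$, and check at the base level that the local $+$-condition at the two split primes above $p$ agrees (after tensoring with $\mathbb{Q}_p$) with the Bloch-Kato Selmer condition---an identification intrinsic to the plus-construction in the supersingular $a_p=0$ setting. Combined with the quasi-isomorphism $X^+\sim\Lambda\oplus X^+_{\mathrm{tor}}$ of Theorem \ref{main Theorem}, the corank-one hypothesis then forces $\mathrm{lg}_{\mathfrak{a}}X^+_{\mathrm{tor}}=0$.

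Applying Theorem \ref{main Theorem}(2) at $P=\mathfrak{a}\neq(p)$ (which uses assumption (1)) yields $\mathrm{lg}_{\mathfrak{a}}\bigl(H^1_+(\mathcal{K},T\otimes\Lambda(\Psi))/\Lambda\kappa^+_1\bigr)=0$, so $\kappa^+_1$ generates $H^1_+(\mathcal{K},T\otimes\Lambda(\Psi))_{\mathfrak{a}}$. A control theorem for $H^1_+$ at the trivial character then shows that the image $\kappa_1\in H^1_+(\mathcal{K},T)$ of $\kappa^+_1$ is non-torsion in $H^1_+(\mathcal{K},T)\otimes_{\mathbb{Z}_p}\mathbb{Q}_p$. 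The Gross-Zagier formula converts this into $L_\mathcal{K}'(f,1)\neq 0$; since assumption (1) guarantees root number $-1$ for $E/\mathcal{K}$, the functional equation then pins the order of vanishing of $L_\mathcal{K}(f,s)$ at $s=1$ to exactly one.

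The main obstacle is the control-theorem bookkeeping: one must carefully compare $H^1_{\mathcal{F}^+}(\mathcal{K},E[p^\infty])$ with the specialization of $X^+$, and likewise compare the specialization of $H^1_+(\mathcal{K},T\otimes\Lambda(\Psi))$ at $\mathfrak{a}$ with $H^1_+(\mathcal{K},T)$, in each case tracking the plus-local conditions and verifying that the relevant maps have finite kernel and cokernel after inverting $p$. The ordinary analogue is the routine bookkeeping carried out in \cite{Howard}; the present setting requires the $\pm$-adjustments of \cite{Kobayashi} already used elsewhere in this paper.
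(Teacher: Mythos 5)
Your proposal is correct and follows essentially the same route as the paper, which simply runs the argument of \cite[Theorem 1.7]{WAN} with $\mathcal{F}^+$ in place of $\mathcal{F}$ (control at the augmentation prime, Theorem \ref{main Theorem}(2) at $P=(T)\neq(p)$ to force $\kappa_1^+$ to generate after localization, specialization, then Gross--Zagier), handling the local control at the primes above $p$ exactly as you indicate, via the $\pm$-arguments of \cite[Proposition 9.2, Theorem 9.3]{Kobayashi} transposed to the anticyclotomic $\mathbb{Z}_p$-extension. Your sketch makes explicit the bookkeeping that the paper leaves to the citations, and I see no gap in it.
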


\noindent This paper is organized as follows. In section 2 we introduce our $\pm$-local theory and construct the $\pm$- Heegner point Kolyvagin system in the sense of \cite{Howard} to give the upper bound for Selmer groups. In section 3 we first prove an explicit reciprocity law for Heenger points as is done in \cite{Castella} by the first author in the ordinary case. Then we use it to deduce the lower bound for Selmer groups from a two-variable Iwasawa-Greenberg main conjecture proved by the second author.
\section{Upper Bound for Selmer Groups}
\subsection{Some Local Theory}
\noindent We develop some local theory for studying the anticyclotomic Iwasawa theory in the supersingular case. These theories generalize the local theory in \cite{DI} (so in fact there is no need to assume the class number of $\mathcal{K}$ to be prime to $p$ in \emph{loc.cit} at least when $p$ is split in $\mathcal{K}$).

Define $\omega^+_n(X):=X\prod_{2\leq m\leq n, 2|m}\Phi_m(X)$ and $\omega^-_n(X):=\prod_{1\leq m\leq n, 2\nmid m}\Phi_m(X)$ (our definition is slightly different from \cite{Kobayashi}). It is easily seen that the prime $v|p$ is finitely decomposed as $v_1\cdots v_{p^t}$ in $\mathcal{K}^-$ (and thus is also decomposed into $p^t$ distinct primes in $\mathcal{K}_\infty$). Let $\Gamma_1$ ($\Gamma_1^-$) be the decomposition group of $v_1$ (and also other $v_i$'s) in $\Gamma_\mathcal{K}$ (or $\Gamma^-$). Let $H_\mathcal{K}$ be the Hilbert class field of $\mathcal{K}$ and $\mathcal{K}_0:=\mathcal{K}_\infty\cap H_\mathcal{K}$ and let $\mathcal{K}_m^-$ be the subfield of $\mathcal{K}_\infty$ such that $[\mathcal{K}_m^-:\mathcal{K}_0]=p^m$. Let $a$ be the inertial degree of $\mathcal{K}_0/\mathcal{K}$ at any $v|p$. Let $\mathbb{Q}_p^{ur}$ be the unramified $\mathbb{Z}_p$-extension of $\mathbb{Q}_p$ and $\mathbb{Q}_{p,\infty}$ being the cyclotomic $\mathbb{Z}_p$-extension of $\mathbb{Q}_p$. Let $\mathbb{Q}_{p,\infty}^{ur}$ be their composition. For $v|p$ we identify $\mathcal{K}_v\simeq \mathbb{Q}_p$. Let $u_v$ and $\gamma_v$ be topological generators of $U_v:=\mathrm{Gal}(\mathbb{Q}_{p,\infty}^{ur}/\mathbb{Q}_{p,\infty})$ and $\Gamma_v=\mathrm{Gal}(\mathbb{Q}_{p,\infty}^{ur}/\mathbb{Q}_{p}^{ur})$. In fact we choose $u_v$ to be the arithmetic Frobenius. Suppose $u_v$ and $\gamma_v$ are chosen such that $-p^au_v+\gamma_v$ is a topological generator for $\mathrm{Gal}(\mathcal{K}_{\infty,v}/\mathcal{K}_{\infty,v}^-)$. Let $X_v=\gamma_v-1$ and $Y_v=u_v-1$. We sometimes omit the subscript $v$ and write them $X$, $Y$, $\gamma$, $u$ later on. Let $T=\gamma^--1\in\mathbb{Z}_p[[\Gamma^-]]\simeq\mathbb{Z}_p[[T]]$.

For any unramified extension $k$ of $\mathbb{Q}_p$ we define
$$E^+[{k(\mu_{p^{n+1}})}]=\{x\in E({k(\mu_{p^{n+1}})})|\mathrm{tr}_{k(\mu_{p^{n+1}})/k(\mu_{p^{\ell+2}})}(x)\in E({k(\mu_{p^{\ell+1}})}), 0\leq \ell<n, 2|\ell\}.$$
Let $\hat{E}$ be the formal group for $E$. We also define the $+$-norm subgroup
$$\hat{E}^+[\mathfrak{m}_{k(\mu_{p^{n+1}})}]=\{x\in \hat{E}(\mathfrak{m}_{k(\mu_{p^{n+1}})})|\mathrm{tr}_{k(\mu_{p^{n+1}})/k(\mu_{p^{\ell+2}})}(x)\in \hat{E}(\mathfrak{m}_{k(\mu_{p^{\ell+1}})}), 0\leq \ell<n, 2|\ell\}.$$
Note that since $p\nmid\sharp E[\mathbb{F}_{p^m}]$, $$E^+[{k(\mu_{p^{n+1}})}]\otimes\mathbb{Q}_p/\mathbb{Z}_p=\hat{E}^+[\mathfrak{m}_{k(\mu_{p^{n+1}})}]\otimes
\mathbb{Q}_p/\mathbb{Z}_p.$$
As in \cite{WAN1}, for $z\in\mathcal{O}_k^\times$ we define a point $c_{n,z}\in \hat{E}[\mathfrak{m}_{k(\zeta_{p^n})}]$ such that
$$\log_{\hat{E}}(c_{n,z})=[\sum_{i=1}^\infty(-1)^{i-1}z^{\varphi^{-(n+2i)}}\cdot p^i]+\log_{f_z^{\varphi^{-n}}}(z^{\varphi^{-n}}\cdot(\zeta_{p^n}-1))$$
where $\varphi$ is the Frobenius on $k$ and $f_z(x):=(x+z)^p-z^p$, the $$\log_f(X)=\sum_{n=0}^\infty(-1)^n\frac{f^{(2n)}(X)}{p^n}$$
for $f^{(n)}=f^{\varphi^{n-1}}\circ f^{\varphi^{n-2}}\circ\cdots f(X)$. Let $k_n/k$ be the $\mathbb{Z}_p/p^n\mathbb{Z}_p$ sub-extension of $k(\mu_{p^{n+1}})$ and $\mathfrak{m}_{k,n}$ be the maximal ideal of its valuation ring. We also use the same notation $c_{n,z}$ for $\mathrm{tr}_{k(\zeta_{p^n})/k_{n-1}}c_{n,z}\in \mathfrak{m}_{k,n-1}$ as well. Let $k=k^m$ be unramified $\mathbb{Z}/p^m\mathbb{Z}$-extension of $\mathbb{Q}_p$. We sometimes write $k_{m,n}$ for the above defined $k_n$ with this $k=k^m$. Let $\Lambda_{n,m}=\mathbb{Z}_p[[\mathrm{Gal}(k_{n,m}/\mathbb{Q}_p)]]$. We have the following lemma.
\begin{lemma}
The $\mathbb{Z}_p$-module $E(k_{n,m})$ is torsion free. Moreover $E(k_{n,m})/p^{n'}E(k_{n,m})$ is its orthogonal complement under local Tate pairing
$$H^1(k_{n,m}, E[p^{n'}])\times H^1(k_{n,m}, E[p^{n'}])\rightarrow \mathbb{Z}_p/p^{n'}\mathbb{Z}_p.$$
\end{lemma}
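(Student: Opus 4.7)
The plan is to dispatch the two assertions in order: torsion-freeness of $E(k_{n,m})$ follows from decomposing via the formal group and using supersingularity, while the orthogonality statement is a standard consequence of local Tate duality combined with the Kummer sequence.

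For the first claim, I would use the short exact sequence of $\mathbb{Z}_p$-modules $0\to\hat{E}(\mathfrak{m}_{k_{n,m}})\to E(k_{n,m})\to E(\tilde{k}_{n,m})\to 0$, where $\tilde{k}_{n,m}$ denotes the residue field of $k_{n,m}$. Since $k^m/\mathbb{Q}_p$ is unramified of degree $p^m$ and $k_{n,m}/k^m$ is totally ramified, one has $\tilde{k}_{n,m}=\mathbb{F}_{p^{p^m}}$. Because $p$ is odd, $p^m$ is odd; the Frobenius eigenvalues $\pm\sqrt{-p}$ then yield $\#E(\mathbb{F}_{p^{p^m}})=p^{p^m}+1$, which is coprime to $p$, so $E(\tilde{k}_{n,m})$ has no $p$-torsion. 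On the other hand, $\hat{E}$ has height $2$, so by examining the Newton polygon of the formal $[p]$-series its non-zero $p$-torsion points generate extensions of $\mathbb{Q}_p^{\mathrm{ur}}$ of ramification index $p^2-1$, coprime to $p$; but $[k_{n,m}:\mathbb{Q}_p]=p^{n+m}$ is a pure $p$-power, so $\hat{E}(\mathfrak{m}_{k_{n,m}})$ has no $p$-torsion either. Combining the two, $E(k_{n,m})$ is torsion free as a $\mathbb{Z}_p$-module.

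For the orthogonality, I would invoke the Kummer short exact sequence
\[
0 \longrightarrow E(k_{n,m})/p^{n'}E(k_{n,m}) \longrightarrow H^1(k_{n,m},E[p^{n'}]) \longrightarrow H^1(k_{n,m},E)[p^{n'}] \longrightarrow 0
\]
combined with local Tate duality. It is a classical fact that the Kummer image is isotropic under the cup-product pairing induced by the Weil pairing $E[p^{n'}]\otimes E[p^{n'}]\to\mu_{p^{n'}}$, so it remains only to compare cardinalities. Local Tate duality identifies $H^1(k_{n,m},E)[p^{n'}]$ with the Pontryagin dual of $E(k_{n,m})/p^{n'}E(k_{n,m})$, whence the Kummer sequence yields $|H^1(k_{n,m},E[p^{n'}])|=|E(k_{n,m})/p^{n'}E(k_{n,m})|^2$; that is, $|E(k_{n,m})/p^{n'}E(k_{n,m})|$ matches the cardinality of a maximal isotropic subspace. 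Combined with isotropy, the Kummer image coincides with its own orthogonal complement.

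The main technical point I anticipate is the formal-group piece of the first paragraph: the key input is the numerical coincidence that the ramification index attached to the $p$-torsion of the height-$2$ formal group, namely $p^2-1$, is coprime to $[k_{n,m}:\mathbb{Q}_p]$. Once this is in place, both halves of the lemma reduce to routine bookkeeping with the Kummer sequence and local Tate duality, together with the EP-characteristic formula for $H^1(k_{n,m},E[p^{n'}])$.
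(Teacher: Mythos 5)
Your proposal is correct, but it reaches the torsion-freeness by a genuinely different route than the paper. The paper's proof is a one-liner: since $E[p]$ is irreducible as a $G_{\mathbb{Q}_p}$-module and $k_{n,m}/\mathbb{Q}_p$ is Galois of $p$-power degree, a nonzero element of $E(k_{n,m})[p]$ would yield (after taking fixed vectors for the $p$-group $\mathrm{Gal}(k_{n,m}/\mathbb{Q}_p)$ acting on $E[p]^{G_{k_{n,m}}}$) a nonzero $G_{\mathbb{Q}_p}$-fixed vector, contradicting irreducibility; this gives the (intended, $p$-primary) torsion-freeness, and the self-annihilation of the Kummer image is then the standard local Tate duality statement. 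You instead prove $E(k_{n,m})[p]=0$ directly from supersingularity: the reduction sequence $0\to\hat{E}(\mathfrak{m}_{k_{n,m}})\to E(k_{n,m})\to\tilde{E}(\tilde{k}_{n,m})\to 0$ (note the quotient should be the reduced curve $\tilde{E}$, not $E$), the count $\#\tilde{E}(\mathbb{F}_{p^{p^m}})=p^{p^m}+1$ prime to $p$ using $a_p=0$ and odd residue degree (the paper quotes this same fact just before the lemma), and the Newton-polygon observation that height $2$ forces every coefficient of $[p](X)$ in degree $<p^2$ to be divisible by $p$, so all nonzero $p$-torsion points of $\hat{E}$ have valuation $1/(p^2-1)$, which is incompatible with the ramification index dividing the pure $p$-power $[k_{n,m}:\mathbb{Q}_p]=p^{n+m}$. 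This is in effect a self-contained reproof of the local input underlying the irreducibility of $E[p]\vert_{G_{\mathbb{Q}_p}}$ when $a_p=0$: your argument is longer but makes the dependence on supersingularity explicit, while the paper's is shorter because it reuses a hypothesis already in force throughout. Your second half --- isotropy of the Kummer image together with the cardinality identity $\#H^1(k_{n,m},E[p^{n'}])=\bigl(\#E(k_{n,m})/p^{n'}E(k_{n,m})\bigr)^2$ obtained from the Kummer sequence, Tate duality $H^1(k_{n,m},E)[p^{n'}]\simeq\mathrm{Hom}(E(k_{n,m})/p^{n'},\mathbb{Q}_p/\mathbb{Z}_p)$, and nondegeneracy of the pairing --- is the same standard duality argument the paper leaves implicit, and it is complete as written.
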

\begin{proof}
It follows easily from the fact that $E[p]$ is an irreducible $G_{\mathbb{Q}_p}$-module.
\end{proof}

Choose $d:=\{d_m\}_m\in\varprojlim_{m}\mathcal{O}_{k^m}^\times$ where the transition is given by the trace map such that $d$ generates this inverse limit over $\mathbb{Z}_p[[U_v]]$ (such $d$ exists by the normal basis theorem). If $d_m=\sum_j a_{m,j}\zeta_j$ where $\zeta_j$ are roots of unity and $a_{m,j}\in\mathbb{Z}_p$. Define $c_{n,m}=\sum a_{m,j}c_{n,\zeta_j}$. Define $\Lambda_{n,m}^+:=\mathbb{Z}_p[[\Gamma_1]]/(\omega_n^+(X), (1+Y)^m-1)$. The following is \cite[Lemma 2.3]{WAN1}.
\begin{lemma}
For even $n$'s we have
$$\mathrm{tr}_{k_{m+1,n}/k_{m,n}}c_{n,m+1}=c_{n,m},$$
$$\mathrm{tr}_{k_{m,n}/k_{m,n-2}}c_{n,m}=-c_{n-2,m}$$
and that $c_{n,m}$ generates $\hat{E}^+[\mathfrak{m}_{k_{n,m}}]\simeq \Lambda_{n,m}^+$ as a $\Lambda_{n,m}^+$-module.
\end{lemma}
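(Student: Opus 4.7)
The plan is to verify the three assertions one by one, following the computation of \cite[Lemma 2.3]{WAN1}. The central tool is the explicit formula for $\log_{\hat{E}}(c_{n,z})$ together with the Frobenius-compatible normal basis $\{d_m\}$; both trace relations ultimately reduce to comparing the two pieces of $\log_{\hat{E}}(c_{n,z})$ — the series $\sum_{i\geq 1}(-1)^{i-1} z^{\varphi^{-(n+2i)}}p^i$ and the term $\log_{f_z^{\varphi^{-n}}}(z^{\varphi^{-n}}(\zeta_{p^n}-1))$ — under the appropriate Galois averaging.

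For the first relation, I would observe that $\mathrm{Gal}(k^{m+1}/k^m)$ fixes $\zeta_{p^n}$ and acts only through the coefficient $z$. Since $c_{n,m} = \sum_j a_{m,j} c_{n,\zeta_j}$ is built $\mathbb{Z}_p$-linearly from the expansion $d_m = \sum_j a_{m,j}\zeta_j$, summing $c_{n,m+1}$ over its $\mathrm{Gal}(k^{m+1}/k^m)$-conjugates amounts to averaging the two pieces of the log formula with $z$ replaced successively by the Frobenius conjugates of $d_{m+1}$; the Frobenius-linearity of each piece then turns this into the $\mathbb{Z}_p$-linear trace of $d_{m+1}$, which is $d_m$ by the defining compatibility of the system $\{d_m\}$, hence equals $\log_{\hat{E}}(c_{n,m})$. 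For the second relation, I would factor $\mathrm{tr}_{k_{m,n}/k_{m,n-2}}$ through the intermediate layer $k_{m,n-1}$ and apply the distribution relation for $\zeta_{p^n}-1$ under partial Galois averaging. In the logarithm term, the recursive structure of $\log_f = \sum_{n\geq 0}(-1)^n f^{(2n)}/p^n$ produces a telescoping cancellation, while in the polynomial part $\sum_i (-1)^{i-1} z^{\varphi^{-(n+2i)}}p^i$ the index shift $n\mapsto n-2$ rotates the sign and yields $-c_{n-2,m}$; this is exactly Kobayashi's $\pm$-sign \cite{Kobayashi}.

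For the generator statement I would combine two inputs. First, $\hat{E}^+[\mathfrak{m}_{k_{n,m}}]$ is free of rank one over $\Lambda_{n,m}^+$; in the cyclotomic direction this is Kobayashi's identification (which is why $\omega_n^+$ appears in the definition of $\Lambda_{n,m}^+$), and in the unramified direction it is ensured by choosing $d$ to be a $\mathbb{Z}_p[[U_v]]$-generator of $\varprojlim_m\mathcal{O}_{k^m}^\times$ via the normal basis theorem. Second, one checks that $c_{n,m}$ is not in $\mathfrak{m}_{\Lambda_{n,m}^+}\cdot \hat{E}^+[\mathfrak{m}_{k_{n,m}}]$ by reading off the leading-order behaviour of $\log_{\hat{E}}(c_{n,m})$ near $\zeta_{p^n}=1$: the dominant contribution is of the form $d_m^{\varphi^{-n}}(\zeta_{p^n}-1)$ up to a unit, whose valuation matches that of a uniformizer of $k_{n,m}$, so $c_{n,m}$ indeed generates.

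I expect the main obstacle to be the second trace relation, specifically producing the sign $-1$. This requires careful bookkeeping of the index shift in the polynomial part against the recursion for $\log_f$ in the cyclotomic part, and it is the computation that makes the whole ``plus'' structure internally consistent. Once the sign is pinned down, the first relation and the generator property are essentially formal consequences of Frobenius-linearity and Kobayashi's rank-one description of the plus-norm module.
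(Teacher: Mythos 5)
You should first be aware that the paper contains no proof of this lemma at all: the sentence immediately preceding it (``The following is \cite[Lemma 2.3]{WAN1}'') is the entire justification, so the computation is outsourced to \cite{WAN1} and there is no in-paper argument to measure your sketch against. Your plan --- work directly from the explicit formula for $\log_{\hat{E}}(c_{n,z})$, use trace-compatibility of the normal-basis family $d=\{d_m\}$ in the unramified direction, a Kobayashi-type distribution computation in the cyclotomic direction, and a Nakayama-type argument for generation --- is indeed the natural route and presumably the one taken in \emph{loc.cit.}

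However, as written the proposal asserts rather than establishes exactly the points where the content lies. (i) The term $\log_{f_z^{\varphi^{-n}}}(z^{\varphi^{-n}}(\zeta_{p^n}-1))$ is not linear in $z$, so ``Frobenius-linearity of each piece'' is not available as stated; what rescues the first relation is that for Teichm\"uller $z$ one has $z^\varphi=z^p$, hence $f_z(zX)=z^{\varphi}\bigl((X+1)^p-1\bigr)$ and inductively $f_z^{(i)}(zX)=z^{\varphi^{i}}g^{(i)}(X)$ with $g(X)=(X+1)^p-1$ independent of $z$. Only after this observation is $\log_{\hat{E}}(c_{n,z})$ a linear combination of $\varphi$-twists of $z$ with $z$-independent coefficients, which is needed both to see that $c_{n,m}$ is independent of the (highly non-unique) expansion $d_m=\sum_j a_{m,j}\zeta_j$ and to reduce the trace to $\mathrm{tr}(d_{m+1})=d_m$; you also need the equivariance $\sigma(c_{n,\zeta})=c_{n,\zeta^\sigma}$, which requires invoking injectivity of $\log_{\hat{E}}$ on the formal group. (ii) For $\mathrm{tr}_{k_{m,n}/k_{m,n-2}}c_{n,m}=-c_{n-2,m}$ you only predict that a ``telescoping cancellation'' will produce the sign; but the sign (and the absence of an $a_p$-term, i.e.\ the use of $a_p=0$) is precisely the assertion, and it has to come out of the actual evaluation of $\sum_{\zeta^p=1}g^{(i)}$ at the shifted cyclotomic arguments against the series $\sum_i(-1)^{i-1}z^{\varphi^{-(n+2i)}}p^i$ --- you have identified the crux and not carried it out. (iii) The isomorphism $\hat{E}^+[\mathfrak{m}_{k_{n,m}}]\simeq\Lambda^+_{n,m}$ is itself part of the statement, and in the two-variable (unramified $\times$ cyclotomic) tower it is not a formal consequence of Kobayashi's one-variable result plus the normal basis theorem; one needs the structure of $\hat{E}$ over the unramified tower (e.g.\ $p\nmid\sharp E(\mathbb{F}_{p^m})$ and a rank count of the $+$-norm subgroup, which in turn uses the trace relations) before your leading-term/Nakayama argument can identify $c_{n,m}$ as a generator. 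In short: the strategy is right, but the three steps you describe as ``essentially formal'' are the proof, and filling them in amounts to reproducing \cite[Lemma 2.3]{WAN1}.
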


Define $\Gamma_{n,m}$ to be $\Gamma_1/(\gamma^n-1,u^m-1)$. Let $k_{n,m}$ be the subfield of $\mathbb{Q}_{p,\infty}^{ur}$ fixed by $\Gamma_{n,m}$. We define $E^+(k_{n,m})$ as the image of $E^+[{k(\mu_{p^{n+1}})}]$ under the trace map $\mathrm{tr}_{k(\mu_{p^{n+1}})/k_{n,m}}$.

\begin{definition}
Define $H^1_+(k_{n,m}, T)$ as the orthogonal complement of $E^+(k_{n,m})\otimes\mathbb{Q}_p/\mathbb{Z}_p$ under Tate local pairing.
Define $H^1_+(\mathcal{K}_v, T\otimes\mathbb{Z}_p[[\Gamma_1^-]](\Psi))$ to be the image of $H^1_+(\mathcal{K}_v, T\otimes\mathbb{Z}_p[[\Gamma_1]](\Psi))$ in $H^1(\mathcal{K}_v, T\otimes\mathbb{Z}_p[[\Gamma_1^-]](\Psi))$ using $\Gamma_1=\Gamma_1^-\times\Gamma_1^+$.
\end{definition}

As in \cite{WAN1} we may choose $b_{n,m}\in H^1_+(\mathcal{K}_v, T\otimes\Lambda_{n,m}(\Psi))$ such that $\varprojlim_{n,m}b_{n,m}$ generates $H^1_+(\mathcal{K}_v, T\otimes\mathbb{Z}_p[[\Gamma_1]](\Psi))$ as a free rank one module over $\mathbb{Z}_p[[\Gamma_1]]$ and $(-1)^{\frac{n+2}{2}}\omega_n^-(X)b_{n,m}=c_{n,m}$. We know $\mathcal{K}_{m,v}^-\subset k_{m,m+a}$. We take $$a_m:=\mathrm{tr}_{k_{m,m+a}/\mathcal{K}_{m,v}^-}b_{m,m+a}.$$ Then $a_m$ are norm compatible and $\varprojlim_m a_m$ generates $H^1_+(\mathcal{K}_v, T\otimes\mathbb{Z}_p[[\Gamma_1^-]])$ as a free $\mathbb{Z}_p[[\Gamma_1^-]]$-module. We have also seen in \cite{WAN1} that $H^1(\mathbb{Q}_p, T\otimes\mathbb{Z}_p[[\Gamma_1]])/H^1_+(\mathbb{Q}_p, T\otimes\mathbb{Z}_p[[\Gamma_1]])$ is free $\mathbb{Z}_p[[\Gamma_1]]$-module of rank one. Thus it is easy to see that $H^1(\mathbb{Q}_p, T\otimes\mathbb{Z}_p[[\Gamma_1^-]])/H^1_+(\mathbb{Q}_p, T\otimes\mathbb{Z}_p[[\Gamma_1^-]])$ is also free of rank one over $\mathbb{Z}_p[[\Gamma_1^-]]$.

We now define a local big logarithm map $\mathrm{LOG}^+$ on $H^1_+(\mathcal{K}_v, T\otimes\mathbb{Z}_p[[\Gamma_1^-]])$.
\begin{definition}
If $x=\varprojlim_nx_n\in H^1_+(\mathcal{K}_v, T\otimes\mathbb{Z}_p[[\Gamma_1^-]])$ such that $x_n=f_n\cdot a_n$ for $f_n\in\mathbb{Z}_p[[\Gamma_1^-/p^{n+a}\Gamma_1^-]]$ then it is easily seen that $\varprojlim_nf_n$ defines an element $f\in\Lambda$ which we define to be $\mathrm{LOG}^+(x)$.
Take $\gamma_1(=\mathrm{id}),\gamma_2,\cdots, \gamma_{p^t}$ be elements in $\Gamma_\mathcal{K}$ such that $\gamma_iv_1=v_i$. Then $\mathbb{Z}_p[[\Gamma^-]]=\oplus_{i=1}^{p^t}\gamma_i\mathbb{Z}_p[[\Gamma_1^-]]$. We also define $\mathrm{LOG}^+$ on $H^1_+(\mathcal{K}_v, T\otimes\Lambda(\Psi))=\oplus_iH^1_+(\mathcal{K}_v, T\otimes\mathbb{Z}_p[[\Gamma_1^-]](\Psi))\cdot\gamma_i$ by: if $x=\sum_i\gamma_ix_i$ then
$$\mathrm{LOG}^+x=\sum_i\gamma_i\cdot(\mathrm{LOG}^+x_i)\in\Lambda.$$
This does not depend on the choices of $\gamma_i$'s.
\end{definition}
We define
$$E^+(\mathcal{K}_{m,v})=\{x\in E(\mathcal{K}_{m,v})|\mathrm{tr}_{\mathcal{K}_{m,v}/\mathcal{K}_{\ell+1,v}}(x)\in E(\mathcal{K}_{\ell,v}), 0\leq \ell<m, 2|\ell\}.$$
Now let us give a description of this module. We have $$E^+(k_{m,m+a})\otimes\mathbb{Q}_p/\mathbb{Z}_p\simeq \Lambda_{m,m+a}^+\otimes\mathbb{Q}_p/\mathbb{Z}_p$$
using the fact that $c_{m,m+a}$ generates the norm subgroup over $\Lambda_{m,m+a}^+$. Under this identification we have $E^+(\mathcal{K}_{m,v})=\Lambda_{m,m+a}^+[\gamma'_v]$ for $\gamma'_v=-p^au_v+\gamma_v$. Let $Z:=(1+X)(1+Y)^{p^a}-1$. Moreover we have
\[\begin{CD}\frac{\mathbb{Z}_p[[X,Y]]}{(\omega_m^+(X), (1+X)(1+Y)^{-p^a}-1)}@>\times\frac{(1+Z)^{p^m}-1}{Z}>\sim>
\frac{(\frac{(1+Z)^{p^m}-1}{Z})\mathbb{Z}[[X,Y,Z]]}{((1+Z)^{p^m}-1,\omega_m^+(X), (1+X)(1+Y)^{-p^a}-(1+Z))},\end{CD}\]
where the latter term represents $E^+(\mathcal{K}_{m,v})$.
\[\begin{CD}\frac{\mathbb{Z}_p[[X,Y]]}{(\omega_m^+(X), (1+X)(1+Y)^{-p^a}-1)}@>>
\sim>\frac{\mathbb{Z}_p[[Y]]}{(\omega_m^+((1+Y)^{p^a}-1))}:=\Lambda_m^+.\end{CD}\]
This gives the $E^+(\mathcal{K}_{m,v})$ a $\Lambda_m$-module structure (isomorphic to $\Lambda_m^+$ defined above) under the natural projection $\mathrm{Gal}(\mathbb{Q}_{p,\infty}^{ur}/\mathbb{Q}_{p,\infty})\simeq \Gamma^-$.
Then one can easily checks that
$$E^+(\mathcal{K}_{m,v})\otimes\mathbb{Q}_p/\mathbb{Z}_p=(E^+(k_{m,m+a})\otimes\mathbb{Q}_p/\mathbb{Z}_p)\cap H^1(\mathcal{K}_{m,v}, E[p^\infty])$$
and so we have
\begin{lemma}
The $E^+(\mathcal{K}_{m,v})\otimes\mathbb{Q}_p/\mathbb{Z}_p$ is the orthogonal complement of $H^1_+(\mathcal{K}_{m,v}, T)$.
\end{lemma}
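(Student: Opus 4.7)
The plan is to derive the lemma from local Tate duality, using the explicit description of $E^+(\mathcal{K}_{m,v})\otimes\mathbb{Q}_p/\mathbb{Z}_p$ as the intersection $(E^+(k_{m,m+a})\otimes\mathbb{Q}_p/\mathbb{Z}_p)\cap H^1(\mathcal{K}_{m,v}, E[p^\infty])$ established just above the lemma, together with the definition of $H^1_+(k_{m,m+a}, T)$ as the Tate-orthogonal of $E^+(k_{m,m+a})\otimes\mathbb{Q}_p/\mathbb{Z}_p$.

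First I would identify $H^1_+(\mathcal{K}_{m,v}, T)$ with the corestriction image $\mathrm{cor}_{k_{m,m+a}/\mathcal{K}_{m,v}}\bigl(H^1_+(k_{m,m+a}, T)\bigr)$. This follows by unwinding the limit definition: $H^1_+(\mathcal{K}_v, T\otimes\mathbb{Z}_p[[\Gamma_1^-]])$ is free of rank one over $\mathbb{Z}_p[[\Gamma_1^-]]$ on $\varprojlim_m a_m$ with $a_m=\mathrm{tr}_{k_{m,m+a}/\mathcal{K}_{m,v}^-}b_{m,m+a}$, so its image at finite level $m$ is the cyclic module $\Lambda_m\cdot a_m$; on the other hand, $H^1_+(k_{m,m+a}, T)$ is cyclic over $\Lambda_{m,m+a}$ on $b_{m,m+a}$, and its corestriction to $\mathcal{K}_{m,v}$ is therefore cyclic over the group ring of $\mathrm{Gal}(\mathcal{K}_{m,v}/\mathbb{Q}_p)$ on $\mathrm{tr}(b_{m,m+a})=a_m$, giving the same module.

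Next I would invoke the adjointness of corestriction and restriction under the local Tate pairing: for every $\xi\in H^1(k_{m,m+a}, T)$ and $\eta\in H^1(\mathcal{K}_{m,v}, E[p^\infty])$,
\[
\langle\mathrm{cor}(\xi),\eta\rangle_{\mathcal{K}_{m,v}}=\langle\xi,\mathrm{res}(\eta)\rangle_{k_{m,m+a}}.
\]
Letting $\xi$ range over $H^1_+(k_{m,m+a}, T)$ shows that the orthogonal of $H^1_+(\mathcal{K}_{m,v}, T)$ in $H^1(\mathcal{K}_{m,v}, E[p^\infty])$ equals $\mathrm{res}^{-1}\bigl(H^1_+(k_{m,m+a}, T)^\perp\bigr)$. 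By definition $H^1_+(k_{m,m+a}, T)$ is the Tate-orthogonal of the $p$-divisible subgroup $E^+(k_{m,m+a})\otimes\mathbb{Q}_p/\mathbb{Z}_p$, and local Tate duality (applied at each $p^n$-truncation and passed to the direct limit) shows that the double-orthogonal recovers it. Combining this with the intersection formula recalled above yields $E^+(\mathcal{K}_{m,v})\otimes\mathbb{Q}_p/\mathbb{Z}_p$ as the orthogonal of $H^1_+(\mathcal{K}_{m,v}, T)$, as claimed.

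The main obstacle is Step~1: one must verify that the corestriction image at finite level agrees with the image of the $\Lambda$-adic $H^1_+$ under projection, with no elements lost. This reduces to the cyclicity of $H^1_+(k_{m,m+a}, T)$ over $\Lambda_{m,m+a}$ (following \cite{WAN1}) and the already-recorded norm-compatibility $\mathrm{tr}_{k_{m,m+a}/\mathcal{K}_{m,v}^-}b_{m,m+a}=a_m$; the remaining steps are then formal consequences of Tate duality and the intersection identity.
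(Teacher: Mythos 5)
Your Steps 2--4 are sound and are essentially the formal content hiding behind the paper's ``and so we have'': the adjunction $\langle\mathrm{cor}(\xi),\eta\rangle_{\mathcal{K}_{m,v}}=\langle\xi,\mathrm{res}(\eta)\rangle_{k_{m,m+a}}$, double orthogonality over $k_{m,m+a}$, and the displayed intersection formula $E^+(\mathcal{K}_{m,v})\otimes\mathbb{Q}_p/\mathbb{Z}_p=(E^+(k_{m,m+a})\otimes\mathbb{Q}_p/\mathbb{Z}_p)\cap H^1(\mathcal{K}_{m,v},E[p^\infty])$, which is the one genuinely non-formal input (it rests on the explicit description $E^+(\mathcal{K}_{m,v})\simeq\Lambda_m^+$ worked out just before the lemma); you should also record that $\mathrm{res}^{-1}$ really is an intersection because $E[p^\infty](k_{m,m+a})=0$ (as $p\nmid\sharp E(\mathbb{F}_{p^m})$ and the formal group is torsion free), so restriction is injective. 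The genuine problem is Step 1, which you yourself flag as the crux. The claim that $H^1_+(k_{m,m+a},T)$ is cyclic over $\Lambda_{m,m+a}$ on $b_{m,m+a}$ is false: by definition $H^1_+(k_{m,m+a},T)$ is the exact annihilator of $E^+(k_{m,m+a})\otimes\mathbb{Q}_p/\mathbb{Z}_p$, whose corank is roughly $\tfrac{1}{2}d$ with $d=[k_{m,m+a}:\mathbb{Q}_p]$, so the annihilator has $\mathbb{Z}_p$-rank about $\tfrac{3}{2}d$, strictly larger than the rank $d$ of the group ring; it cannot be generated by one element. The freeness and cyclicity statements quoted from \cite{WAN1} concern only the Iwasawa modules $H^1_+(\mathcal{K}_v,T\otimes\mathbb{Z}_p[[\Gamma_1]])$ and $H^1_+(\mathcal{K}_v,T\otimes\mathbb{Z}_p[[\Gamma_1^-]])$, and at finite level the image of the limit (equivalently the $\Lambda$-span of $a_m$) is strictly smaller than $H^1_+$. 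In particular, if one \emph{defined} $H^1_+(\mathcal{K}_{m,v},T)$ to be $\Lambda_m\cdot a_m$, as your Step 1 asserts, the lemma itself would fail for large $m$ by the same rank count: the annihilator of $E^+(\mathcal{K}_{m,v})\otimes\mathbb{Q}_p/\mathbb{Z}_p$ has rank about $\tfrac{3}{2}[\mathcal{K}_{m,v}:\mathbb{Q}_p]$, while a cyclic module over the group ring has rank at most $[\mathcal{K}_{m,v}:\mathbb{Q}_p]$.

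The repair is to drop the cyclicity argument entirely. Take $H^1_+(\mathcal{K}_{m,v},T)$ as in the paper's local set-up, namely the corestriction to $\mathcal{K}_{m,v}$ of the full module $H^1_+(k_{m,m+a},T)$ (equivalently, as used later in Proposition \ref{3.9}, the exact orthogonal complement of $E^+(\mathcal{K}_{m,v})\otimes\mathbb{Q}_p/\mathbb{Z}_p$; these agree because $\mathrm{cor}$ is surjective, its image is a closed $\mathbb{Z}_p$-submodule, and the double annihilator of a closed submodule returns it). With either reading, your adjunction-plus-intersection computation goes through verbatim and reproduces the paper's (terse) derivation; only the attempted reduction to the single generators $b_{m,m+a}$ and $a_m$ has to go.
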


In the following we will often identify $u_v=1+Y$ with its image in $\Gamma^-$. We may choose $\gamma^-$ properly such that under the projection to $\mathrm{Gal}(\mathbb{Q}_{p,\infty}^{ur}/\mathbb{Q}_{p,\infty})\simeq \Gamma^-$ we have $(1+Y)=(1+T)^{p^t}$.
\begin{remark}
The local theory developed in \cite{DI} is a special case of our theory when $a=0$ and $t=0$.
\end{remark}
Take a $\bar{\mathbb{Q}}_p$-point $\phi$ of $\mathrm{Spec}\mathbb{Z}_p[[\Gamma_v\times U_v]]$ which corresponds to a finite order character $\chi_\phi$ of $\Gamma_v\times U_v$ mapping $\gamma_v$ and $u_v$ to primitive $p^n$ and $p^m$-th roots of unities, with $n$ being positive and even. Suppose $x=\varprojlim_{n,m}x_{n,m}$ and $\mathrm{LOG}^+(x)=f=\varprojlim_{n,m}f_{n,m}$. Then the following formulas are in \cite[Proposition 2.10]{WAN1} and will be useful.
\begin{equation}\label{Interpolation Formula}
\sum_{\tau\in\Gamma_v/p^n\Gamma_v\times U_v/p^mU_v} \log_{\hat{E}}x_{n,m}^\tau\cdot \chi_\phi(\tau)=\frac{\phi(f_{n,m})\sum \log_{\hat{E}}c_{n,m}^\tau\chi_\phi(\tau)}{\phi(\omega_n^-(X))}(-1)^{\frac{n+2}{2}},
\end{equation}
\begin{equation}\label{equation (2)}
\sum_{\tau\in\Gamma_v/p^n\Gamma_v\times U_v/p^mU_v}\log_{\hat{E}}(c_{n,m})^\tau\chi_\phi(\tau)=\mathfrak{g}(\chi_\phi|_{\Gamma})\cdot \chi_\phi(u)^n\cdot \sum_{u\in U_m}\chi_\phi(u)d_m^u.
\end{equation}

\noindent For $S$ a finite set of primes not dividing $p$ in $\mathcal{K}$ we also write $S$ for the product of the primes in $S$.
For each integer $m$ we denote $\mathcal{K}_m[S]$ for the composition field of $\mathcal{K}_m$ and the ring class field $\mathcal{K}[S]$ of conductor $S$.
\begin{lemma}\label{Lemma 2.2}
The module $M_S:=H^1(\mathcal{K}[S],T\otimes\Lambda(\Psi))$ is free over $\Lambda$.
\end{lemma}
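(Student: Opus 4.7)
The plan is to apply the elementary criterion that a finitely generated module $M$ over the two-dimensional regular local ring $\Lambda\cong\mathbb{Z}_p[[\gamma^--1]]$ is $\Lambda$-free if and only if (i) $\gamma^--1$ is a non-zero-divisor on $M$, and (ii) $M/(\gamma^--1)M$ is $\mathbb{Z}_p$-torsion-free; given (i) and (ii), lifting a $\mathbb{Z}_p$-basis of the quotient to $M$ and invoking Nakayama over the complete local ring $\Lambda$, together with (i) to deduce $\Lambda$-linear independence, yields freeness. The finite generation of $M_S$ over $\Lambda$ is standard, via Shapiro's lemma and the fact that the relevant ordinary Galois cohomology is finitely generated over $\mathbb{Z}_p$.

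For (i), I would apply continuous $G_{\mathcal{K}[S]}$-cohomology to the tautological short exact sequence
$$0\to T\otimes\Lambda(\Psi)\xrightarrow{\gamma^--1}T\otimes\Lambda(\Psi)\to T\to 0$$
(the quotient identifies with $T$ equipped with its original $G_{\mathcal{K}[S]}$-action, since $\Psi$ becomes trivial modulo $\gamma^--1$) and extract the fragment
$$H^0(\mathcal{K}[S],T)\to M_S\xrightarrow{\gamma^--1}M_S\to H^1(\mathcal{K}[S],T).$$
The first term vanishes: since $\mathcal{K}[S]/\mathcal{K}$ is abelian, the image of $G_{\mathcal{K}[S]}$ in $\mathrm{Gal}(\mathcal{K}(E[p])/\mathcal{K})\cong\mathrm{GL}_2(\mathbb{F}_p)$ contains the commutator subgroup $\mathrm{SL}_2(\mathbb{F}_p)$, which acts absolutely irreducibly on $E[p]\cong\mathbb{F}_p^2$, so $T^{G_{\mathcal{K}[S]}}=0$. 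For (ii), the same exact sequence embeds $M_S/(\gamma^--1)M_S$ into $H^1(\mathcal{K}[S],T)$, and applying the Bockstein attached to $0\to T\xrightarrow{p}T\to T/pT\to 0$ together with the identical input $E[p]^{G_{\mathcal{K}[S]}}=0$ shows $p$ is a non-zero-divisor on $H^1(\mathcal{K}[S],T)$; since the latter is finitely generated over $\mathbb{Z}_p$, it is $\mathbb{Z}_p$-free, hence so is its submodule $M_S/(\gamma^--1)M_S$.

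The only point requiring care is verifying that the residual irreducibility of $T/pT$ survives restriction to the smaller group $G_{\mathcal{K}[S]}$; I expect this to be the main (mild) subtlety. It is handled by the observation in the preceding paragraph: abelianness of $\mathcal{K}[S]/\mathcal{K}$ forces the image of $G_{\mathcal{K}[S]}$ in $\mathrm{GL}_2(\mathbb{F}_p)$ to have index dividing $|\mathbb{F}_p^\times|=p-1$, and therefore to contain $\mathrm{SL}_2(\mathbb{F}_p)$, which combined with the maximal image hypothesis $\mathrm{Im}(G_\mathcal{K})=\mathrm{GL}_2(\mathbb{Z}_p)$ delivers the required vanishing. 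No deeper obstacle is expected.
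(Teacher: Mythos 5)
Your proof is correct, and its arithmetic content coincides with the paper's: both arguments come down to $E[p]^{G_{\mathcal{K}[S]}}=0$, which gives the injectivity of $\gamma^--1$ on $M_S$, the embedding $M_S/(\gamma^--1)M_S\hookrightarrow H^1(\mathcal{K}[S],T)$, and the injectivity of multiplication by $p$ on the latter; moreover your justification of residual irreducibility over $G_{\mathcal{K}[S]}$ (the ring class field is abelian over $\mathcal{K}$, so the image of $G_{\mathcal{K}[S]}$ is a normal subgroup of $\mathrm{GL}_2(\mathbb{F}_p)$ with abelian quotient, hence contains $\mathrm{SL}_2(\mathbb{F}_p)$) makes explicit a point the paper only gestures at by citing irreducibility of $E[p]|_{G_\mathcal{K}}$. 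Where you genuinely differ is the commutative-algebra endgame. The paper first upgrades to $\Lambda$-torsion-freeness of $M_S$, embeds $M_S$ into a free module with finite cokernel $N$ via the structure theorem (reflexivity over the two-dimensional regular local ring $\Lambda$), and then rules out $N\neq 0$ by noting that $\mathrm{Tor}^1_\Lambda(N,\Lambda/(\gamma^--1))$ would be a nonzero $p$-torsion module injecting into $M_S/(\gamma^--1)M_S$, contradicting $p$-torsion-freeness there. You instead invoke the freeness criterion ``$\gamma^--1$ regular on $M$ and $M/(\gamma^--1)M$ torsion-free over $\mathbb{Z}_p$ implies $M$ free'', proved by lifting a $\mathbb{Z}_p$-basis and applying Nakayama twice, the kernel dying because $\mathrm{Tor}^1_\Lambda(M_S,\Lambda/(\gamma^--1))=M_S[\gamma^--1]=0$. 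Your route is slightly more elementary: it avoids the structure theorem and needs only regularity of $\gamma^--1$ rather than full $\Lambda$-torsion-freeness, at the cost of stating and proving the lifting criterion; both versions rely on the same (standard, and in the paper implicit) finite generation of $M_S$ over $\Lambda$, so either completes the lemma.
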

\begin{proof}
We first show that $M_S$ is $\Lambda$ torsion-free
and
\[\begin{CD} M_S/XM_S@>\times p>> M_S/XM_S\end{CD}\] are injective.
The first follows easily from that $E[p]|_{G_\mathcal{K}}$ is irreducible. We also get $M_S/XM_S\hookrightarrow H^1(\mathcal{K}[S], T)$. So it suffices to prove that
\[\begin{CD}H^1(\mathcal{K}[S], T)@>\times p>>H^1(\mathcal{K}[S], T)\end{CD}\]
is injective. Again this follows from that $E[p]|_{G_\mathcal{K}}$ is irreducible.
From the first claim above and the structure theorem of $\Lambda$-modules we know that $M_S$ is injected to a free finitely generated $\Lambda$-module $N$ with finite cardinality. On the other hand $\mathrm{Tor}^1_\Lambda(N, \Lambda/X\Lambda)$ is a non-zero $\mathbb{Z}_p$-torsion module which is injected into $M_S/XM_S$, which contradicts the second claim above. We thus get $N=0$ and $M_S$ is free over $\Lambda$.
\end{proof}

\noindent\underline{Poitou-Tate exact sequences}\\
We record here the Poitou-Tate exact sequence which will be used later on. Let $\mathcal{F}\subseteq \mathcal{G}$ be two Selmer conditions then we have the following
\begin{proposition}
We have the following long exact sequence:
$$0\rightarrow H_\mathcal{F}^1(\mathcal{K},\mathbf{T})\rightarrow H_\mathcal{G}^1(\mathcal{K},\mathbf{T})\rightarrow H_{\mathcal{G}}^1(\mathcal{K}_v,\mathbf{T})/H_\mathcal{F}^1(\mathcal{K}_v,\mathbf{T})\rightarrow H_{\mathcal{F}^*}^1(\mathcal{K},\mathbf{A})^*\rightarrow H_{\mathcal{G}^*}^1(\mathcal{K},\mathbf{A})^*\rightarrow 0.$$
\end{proposition}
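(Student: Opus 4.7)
The plan is to obtain this as a Selmer-theoretic version of the classical Poitou--Tate nine-term global duality sequence, and then pass to the Iwasawa-theoretic setting by taking a suitable limit. Throughout I read the statement as implicitly assuming that $\mathcal{F}$ and $\mathcal{G}$ agree at every prime $w \neq v$, so that the single factor $H^1_\mathcal{G}(\mathcal{K}_v, \mathbf{T})/H^1_\mathcal{F}(\mathcal{K}_v, \mathbf{T})$ in the middle accurately measures the local discrepancy; otherwise one would need a direct sum over all places where $\mathcal{F}$ and $\mathcal{G}$ differ.

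First I would work at finite level, replacing $\mathbf{T}$ by $T_n := T \otimes \Lambda/\mathfrak{a}_n$ for cofinite ideals $\mathfrak{a}_n$ of $\Lambda$ with $\bigcap_n \mathfrak{a}_n = 0$. Exactness of the first three terms
$$0 \to H^1_\mathcal{F}(\mathcal{K}, T_n) \to H^1_\mathcal{G}(\mathcal{K}, T_n) \to H^1_\mathcal{G}(\mathcal{K}_v, T_n)/H^1_\mathcal{F}(\mathcal{K}_v, T_n)$$
is a diagram chase from the definition of $H^1_\mathcal{F}$ and $H^1_\mathcal{G}$ as kernels of the localization map, using $\mathcal{F}_w = \mathcal{G}_w$ at $w \neq v$. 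The connecting map to $H^1_{\mathcal{F}^*}(\mathcal{K}, T_n^*)^*$ sends $x \in H^1_\mathcal{G}(\mathcal{K}_v, T_n)$ to the functional $y \mapsto \langle x, \mathrm{loc}_v(y) \rangle_v$; this is well-defined on the quotient by $H^1_\mathcal{F}(\mathcal{K}_v, T_n)$ thanks to the local duality orthogonality $H^1_\mathcal{F}(\mathcal{K}_v, -) \perp H^1_{\mathcal{F}^*}(\mathcal{K}_v, -)$. Exactness at this middle term is the key point and follows from the full nine-term Poitou--Tate sequence together with global reciprocity $\sum_w \langle \cdot, \cdot \rangle_w = 0$ (used in one direction to see that classes coming from global lifts are killed, since $\mathcal{G}_w \perp \mathcal{F}^*_w$ for $w \neq v$, and in the other direction to promote a local class to a global one). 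The surjection onto $H^1_{\mathcal{G}^*}(\mathcal{K}, T_n^*)^*$ is the Pontryagin dual of the inclusion $H^1_{\mathcal{G}^*} \hookrightarrow H^1_{\mathcal{F}^*}$ induced by $\mathcal{G}^* \subseteq \mathcal{F}^*$, and exactness there is the annihilator relationship between the image of the connecting map and $H^1_{\mathcal{G}^*}(\mathcal{K}, T_n^*)$.

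The last step is to pass to the Iwasawa limit, applying $\varprojlim_n$ on the $\mathbf{T}$-side and $\varinjlim_n$ (followed by Pontryagin dualization) on the $\mathbf{A}$-side. The direct limit is exact on the nose, so the main technical obstacle lies on the inverse-limit side, where one needs a Mittag-Leffler condition to ensure that $\varprojlim^1$ vanishes across the sequence. This should be verifiable using Lemma~\ref{Lemma 2.2} (the $\Lambda$-freeness of $M_S$) combined with the irreducibility of $E[p]|_{G_\mathcal{K}}$, which together guarantee that the transition maps between finite-level Selmer groups have controlled cokernels, and hence the inverse system is Mittag-Leffler.
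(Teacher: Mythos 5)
Your argument is correct and is essentially the paper's own proof: the paper simply invokes Rubin's global duality theorem (\emph{Euler Systems}, Theorem 1.7.3) together with its Iwasawa-theoretic Corollary 1.7.5, which is precisely the finite-level Poitou--Tate five-term sequence (with the local term at the one place where $\mathcal{F}$ and $\mathcal{G}$ differ) followed by the passage to limits that you describe. The only remark is that the limit step is simpler than you make it: choosing the ideals $\mathfrak{a}_n$ so that $\Lambda/\mathfrak{a}_n$ is finite (or just using compactness of the finite-level groups) makes $\varprojlim$ exact automatically, so no Mittag--Leffler verification via Lemma \ref{Lemma 2.2} is needed.
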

\begin{proof}
It follows from \cite[Theorem 1.7.3]{Rubin} in a similar way as corollary 1.7.5 in \emph{loc.cit}.
\end{proof}
\subsection{Heegner Points}
\subsubsection{Heegner Points Construction}
Let $B(E)$ be the set of odd rational primes $q$ not dividing $\mathrm{cond}(E)$ and inert in $\mathcal{K}$ and such that $n(q)=\mathrm{exp}_p(q+1,a_q)>0$. An important fact to keep in mind is that $q$ splits completely in $\mathcal{K}_\infty^-/\mathcal{K}$. Let $\Lambda^r$ be the set of the products of $r$ distinct members of $B(E)$ and $\Lambda_n^r$ be the set of products of $r$ distinct primes in $B(E)$ such that $n(q)\geq n$. Suppose $\mathbb{T}$ is a Galois representation of $G_\mathbb{Q}$ with coefficient ring $R$ a complete local ring (in practice a discrete valuation ring or an Iwasawa algebra). For any prime $q\in B(E)$ which generates a prime $v$ of $\mathcal{K}$ such that $I_v$ (the inertial group at $v$) acts trivially on $\mathbb{T}$, let $\mathbf{I}_q$ be the ideal of $R$ generated by $q+1$ and the Frobenius trace for $G_{\mathcal{K}_v}$ acting on $\mathbb{T}$. Define $$H_f^1(\mathcal{K}_v, \mathbb{T}/\mathbf{I}_q):=\mathrm{ker}\{H^1(\mathcal{K}_v, \mathbb{T}/\mathbf{I}_q)\rightarrow H_f^1(I_v, \mathbb{T}/\mathbf{I}_q)\}.$$
We have natural isomorphism of $R/\mathbf{I}_q$-modules,
$$H_f^{1}(\mathcal{K}_v, \mathbb{T}/\mathbf{I}_q)\simeq \mathbb{T}/\mathbf{I}_q.$$
Define $H_s^1(\mathcal{K}_v,\mathbb{T}/\mathbf{I}_q)=\frac{H^1(\mathcal{K}_v,\mathbb{T}/\mathbf{I}_q)}
{H_f^1(\mathcal{K}_v,\mathbb{T}/\mathbf{I}_q)}$.
Then
$$H_s^{1}(\mathcal{K}_v, \mathbb{T}/\mathbf{I}_q)\simeq \mathbb{T}/\mathbf{I}_q$$
as $R/I_q$-modules. These can be seen, for example by \cite[Lemma 1.2.1]{MR}.
There is also a ``finite-singular isomorphism'' $\phi_v^{fs}: H^1_f(\mathcal{K}, \mathbb{T}/\mathbf{I}_q)\rightarrow H^1_s(\mathcal{K},\mathbb{T}/\mathrm{I}_q)$. If $\mathbb{T}$ has coefficient in a discrete valuation ring with uniformizer $\varpi$ and $\varpi^n\in \mathbf{I}_q$. Then we can also define $H^1_f(\mathcal{K}, \mathbb{T}/\varpi^n\mathbb{T})$ and $H^1_s(\mathcal{K}, \mathbb{T}/\varpi^n\mathbb{T})$ similarly. In \cite[Page 8, 9]{Kolyvagin} Kolyvagin also defined a direct summand of $H^1(\mathcal{K}, \mathbb{T}/\varpi^n\mathbb{T})$ which maps
isomorphically to $H^1_s(\mathcal{K}, \mathbb{T}/\varpi^n\mathbb{T})$, which we still
denote as $H^1_s(\mathcal{K}, \mathbb{T}/\varpi^n\mathbb{T})$.\\

\noindent One can make the following constructions as in \cite[Sections 1.7, 2.3]{Howard} and \cite{Fouquet}. For each $S$ a square-free product of distinct primes in $B(E)$ and some $m\geq 0$, one can construct a class 
$$P_m[S]\in H^1(\mathcal{K}_m[S], T)$$
using the Kummer map of certain Heegner points. These satisfies the norm relation
$$\mathrm{Nm}_{\mathcal{K}_{m+2}/\mathcal{K}_{m+1}}P_{m+2}[S]=P_m[S]$$
as in \cite[Lemma 4.2]{DI}.
We also claim that the $P_m[S]$'s lie in the image of the nature map $H^1(K[S], T\otimes\Lambda(\Psi))\rightarrow H^1(K_m[S], T)$ (i.e. they come from universal norms). This can be seen easily as follows: from the obvious long exact sequence we see the cokernel of this image is $H^2(K[S], T\otimes\Lambda(\Psi))[\omega_m]$. This last expression stablizes as $m$ approaches infinity, since $H^2(K[S], T\otimes\Lambda(\Psi))$ is a finitely generated $\Lambda$ module and we have a structure theorem. On the other hand we have $\omega^+_{m_1}/\omega^+_{m_2}P_{m_1}[n]=P_{m_2}[n]$ for $m_1>m_2$. So taking $m_2$ to approach infinity, it is easy to see that any $P_m[S]$ must have zero image in the cokernel mentioned above.

\begin{remark}
The Heegner points are constructed from Shimura curves associated to certain quaternion algberas (possibly $\mathrm{GL}_2$). Such quaternion algebra is ramified exactly at the primes where the local sign of $E$ over $\mathcal{K}$ is $-1$. (In literature sometimes people use the local signs of Rankin-Selberg products which differs from our convention by a factor $\chi_{\mathcal{K}/\mathbb{Q},v}(-1)$, $\chi_{\mathcal{K}/\mathbb{Q}}$ being the quadratic character corresponding to $\mathcal{K}/\mathbb{Q}$.) Therefore this quaternion algebra is $\mathrm{GL}_2$ under our assumption (2) might be general under assumption (1).
\end{remark}

Now we construct the $+$-Heegner point classes. By the freeness result Lemma \ref{Lemma 2.2} and the above claim, there is an unique $P^+_{m}[S]\in\frac{H^1(\mathcal{K}_m[S], T\otimes\Lambda(\Psi))}{\omega_n^+((1+Y)^{p^a}-1)H^1(\mathcal{K}[S], T\otimes\Lambda(\Psi))}$ such that $\omega_n^-((1+Y)^{p^a}-1)P_{m}^+=(-1)^{\frac{m+2}{2}}P_m[S]$ and they form a norm-compatible system. We define $P^+[S]:=\varprojlim_mP^+_m[S]$ as an element of $H^1(\mathcal{K}[S], T\otimes\Lambda(\Psi))$. (Note that $\{\omega_n^+((1+Y)^{p^a}-1)\}_n$ forms a basis for the topology of $\Lambda$. So this indeed defines a class.). By definition, for $v|p, \mathrm{loc}_vP^+[S]\in H^1_+(\mathcal{K}_v,\mathbf{T})$. Now from the $P^+[S]$'s we can use the derivative construction in \cite[Section 1.7]{Howard} to get a Kolygavin system $\kappa_S^+$ for the Selmer structure $\mathcal{F}^+$ in the sense of \cite[Section 1.2]{Howard}, which satisfies the norm relation that if $q\in B(E)$, $q\nmid S$,
\begin{equation}\label{(1)}
\phi^{fs}_qv_q\kappa_S^+\equiv\partial_q\kappa_{Sq}^+(\mathrm{mod}\ \mathbf{I}_q).
\end{equation}
The following proposition is of crucial importance.
\begin{proposition}
The class $\kappa_1^+$ is non-torsion in $H^1_+(\mathcal{K}, \mathbf{T})$.
\end{proposition}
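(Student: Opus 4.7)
The plan is to reduce the non-torsion assertion to the non-vanishing, at some finite layer, of classical Heegner points along the anticyclotomic $\mathbb{Z}_p$-tower, and then invoke the Cornut--Vatsal theorem.

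First I would show that $H^1_+(\mathcal{K},\mathbf{T})$ is $\Lambda$-torsion free, so that ``non-torsion'' amounts to ``non-zero.'' The argument is the one behind Lemma \ref{Lemma 2.2}: the irreducibility of $E[p]|_{G_\mathcal{K}}$ kills $\Lambda$-torsion in $H^1(\mathcal{K}[1],T\otimes\Lambda(\Psi))$, and the subsequent corestriction to $\mathcal{K}$ together with the imposition of the local condition $H^1_+(\mathcal{K}_v,\mathbf{T})$ (which is a free rank-one $\mathbb{Z}_p[[\Gamma_1^-]]$-direct summand of the local cohomology, as recorded just before the definition of $\mathrm{LOG}^+$) preserves torsion-freeness. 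Thus it is enough to exhibit one layer at which $\kappa_1^+$ has non-zero projection.

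Next I would unravel the construction of $\kappa_1^+$ for the trivial Kolyvagin index. Since no derivative operator is applied when $S=1$, the class $\kappa_1^+$ is, up to the Hilbert-class-field corestriction, the universal norm class $P^+[1]=\varprojlim_m P^+_m[1]$. The defining identity
$$\omega_m^-\bigl((1+Y)^{p^a}-1\bigr)\,P^+_m[1]\;=\;(-1)^{(m+2)/2}\,P_m[1]$$
in $H^1(\mathcal{K}_m[1],T)$ means that if $\kappa_1^+$ were zero (equivalently torsion, by the reduction above), then pushing forward to $\mathcal{K}_m$ and Kummer-mapping, every classical Heegner point $P_m[1]$ would be forced to be torsion in $E(\mathcal{K}_m)\otimes\mathbb{Z}_p$ for all $m\geq 0$.

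Finally I would invoke the theorem of Cornut--Vatsal, which under either assumption (1) or (2) (guaranteeing $\epsilon(E/\mathcal{K})=-1$ and the right Shimura-curve Heegner hypothesis) asserts that the points $P_m[1]$ have infinite order in $E(\mathcal{K}_m)$ for all sufficiently large $m$. This contradicts the previous step and proves the proposition. The hard part is the book-keeping in the intermediate step: one has to match the Iwasawa-level class $\kappa_1^+\in H^1_+(\mathcal{K},\mathbf{T})$ with the classical $P_m[1]$ through the $+$-twist by $\omega_m^-$ and through corestriction from the ring class field $\mathcal{K}[1]$ down to $\mathcal{K}$, using that $\{\omega_m^+((1+Y)^{p^a}-1)\}_m$ is a cofinal system of ideals defining the topology of $\Lambda$, so that a specialization at any fixed $m$ does detect the Iwasawa class. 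Once this compatibility is pinned down, the implication ``$\kappa_1^+$ torsion $\Rightarrow P_m[1]$ torsion for all $m$'' is formal and Cornut--Vatsal finishes the argument.
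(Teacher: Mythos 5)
Your proposal is correct and follows essentially the same route as the paper: the paper's proof is exactly the combination of the Cornut--Vatsal non-triviality of the (Heegner point) class with the freeness statement of Lemma \ref{Lemma 2.2} to pass from non-zero to non-torsion. Your additional unwinding of $\kappa_1^+$ through the finite-layer points $P_m[1]$ is just an explicit version of how the Cornut--Vatsal input is used, so there is no substantive difference in approach.
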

\begin{proof}
It is proved in \cite{CV} that this class is non-trivial. See Theorem 1.5 of \emph{loc.cit} and the discussion right after it. And then the proposition follows from Lemma \ref{Lemma 2.2}.
\end{proof}

Now let $P=P_\phi$ be a height one prime of $\Lambda$ which corresponds to a point $\phi\in\mathrm{Spec}\Lambda$. Let $\mathcal{O}=\mathcal{O}_\phi$ be the integral closure of $\Lambda/P$ and $\varpi_\phi$ being its uniformizer. Let $T_\phi$ be the Galois representation $\mathbf{T}\otimes_\Lambda\mathcal{O}_\phi$ and ${A}_\phi$ being $T_\phi\otimes_{\mathcal{O}_\phi}\mathcal{O}_\phi^*$. Define $T_{\phi,m}$ and $A_{\phi,m}$ as the truncation modulo $\varpi_\phi^m$.

We define some local Selmer conditions at primes dividing $p$. We have
$$H^1(\mathbb{Q}_p, \mathbf{T})\rightarrow H^1(\mathbb{Q}_p, \mathbf{T}\otimes_\Lambda\Lambda/P)\rightarrow H^1(\mathbb{Q}_p, T_\phi)$$
is the natural map
$$\Lambda\oplus\Lambda\rightarrow \Lambda/P\oplus\Lambda/P\rightarrow  \mathcal{O}_\phi\oplus\mathcal{O}_\phi$$
since each term above is generated by two elements $v_1$ and $v_2$ whose images generate $H^1(\mathbb{Q}_p, T\otimes\mathbb{F}_p)$ as $\mathbb{F}_p$-vector spaces. If $v_+$ is a generator of $H^1_+(\mathbb{Q}_p, \mathbf{T})$ over $\Lambda$, we define $H^1_+(\mathbb{Q}_p, T_\phi)$ to be the sub-$\mathcal{O}_\phi$ module of $H^1(\mathbb{Q}_p, T_\phi)$ generated by the image of $v_+$. We also make the similar definition for the truncated representation $T_{\phi,m}$.

We define
$H^1_+(\mathcal{K}_v, A_\phi)$ ($H^1_+(\mathcal{K}_v, A_{\phi,m})$) as the orthogonal complement of $H^1_+(\mathcal{K}_v, T_\phi)$ ($H^1_+(\mathcal{K}_v, T_{\phi,m})$, respectively) under Tate pairing.
\subsection{The Upper Bound}
The situation here can now be conveniently adapted to the framework established in \cite{Howard}. For the Galois representation $T_\phi/\varpi_\phi^mT_\phi$ we take the Selmer condition $\mathcal{F}$ in \emph{loc.cit} to be as there at primes outside $p$, and the $+$-Selmer conditions defined above at $p$-adic places. We need to check the axioms (H.0)-(H.5) in \cite[Section 1.3]{Howard}. The only non-trivial part is in (H.4) we need to check the $+$ Selmer condition at $p$ is the orthogonal complement of itself under the local Tate pairing. In fact we have the following proposition is an easy consequence of \cite[Proposition 3.14]{Kim1} (by taking the $d$ there to be a sufficiently large number $p^n$ so that we have inclusion of ideals of $\Lambda$
$$(\omega_n(X),p^m)\subseteq (P_\phi, p^m).)$$
\begin{proposition}
$H^1_+(\mathbb{Q}_p, T_{\phi,m})$ is the orthogonal complement of $H^1_+(\mathbb{Q}_p, T_{\phi^\iota,m})$ under the local Tate pairing.
\end{proposition}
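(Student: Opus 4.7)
The plan is to deduce self-orthogonality by specializing Kim's result \cite[Proposition~3.14]{Kim1}, which establishes the corresponding statement for the finite-level quotients $\mathbf{T}/(\omega_n(X), p^m)$. The paper itself points to this reduction, so the job is to make the specialization argument precise.

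First I would verify the ideal inclusion $(\omega_n(X), p^m) \subseteq (P_\phi, p^m)$ for $n$ sufficiently large. The quotient $\Lambda/(P_\phi, p^m)$ is Artinian (a finite quotient of the one-dimensional ring $\Lambda/p^m\Lambda$), and $\omega_n$ is divisible by increasingly many distinguished factors as $n$ grows; hence the image of $\omega_n$ in $\Lambda/(P_\phi, p^m)$ must vanish for $n$ large, giving the required containment. Fix such an $n$.

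Next, the inclusion of ideals yields a surjection of $G_{\mathbb{Q}_p}$-modules
\[
\pi\colon \mathbf{T}/(\omega_n(X), p^m) \twoheadrightarrow T_{\phi,m},
\]
and analogously for the $\iota$-twist $\pi^\iota\colon \mathbf{T}^\iota/(\omega_n^\iota(X), p^m)\twoheadrightarrow T_{\phi^\iota, m}$. By the very construction of $H^1_+$ in the previous definition (image of the generator $v_+$ of the free rank one $\Lambda$-module $H^1_+(\mathbb{Q}_p,\mathbf{T})$), the induced map on Galois cohomology sends Kim's finite-level $+$-condition $H^1_+(\mathbb{Q}_p,\mathbf{T}/(\omega_n,p^m))$ onto $H^1_+(\mathbb{Q}_p, T_{\phi,m})$, and similarly on the $\iota$-side. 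This is the step that requires care: one must check that $v_+$ remains a generator after successive reduction and that Kim's $+$-condition is also cut out by this same element, so that image commutes with the $+$-condition.

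Finally I would transport the local Tate pairing. Kim's proposition gives an exact annihilator relation at the level of $(\omega_n,p^m)$-quotients. The local duality pairing on $H^1(\mathbb{Q}_p, T_{\phi,m})\times H^1(\mathbb{Q}_p, T_{\phi^\iota, m})$ is identified, via $\pi$ and $\pi^\iota$, with a quotient of the $(\omega_n,p^m)$-level pairing, and under this identification the images of self-annihilating subgroups remain mutually annihilating. A length count over $\mathcal{O}_\phi/\varpi_\phi^m$—with $H^1_+(\mathbb{Q}_p,T_{\phi,m})$ and $H^1_+(\mathbb{Q}_p,T_{\phi^\iota,m})$ each of half the total length as free cyclic modules on the images of $v_+$, $v_+^\iota$—upgrades mutual annihilation to exact orthogonality.

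The main obstacle, as indicated above, is not the algebraic self-orthogonality but the compatibility step: showing that the $+$-condition defined for $T_{\phi,m}$ as the span of the image of $v_+$ genuinely matches the specialization of Kim's finite-level condition. Once this compatibility is in place, the orthogonality at level $(\omega_n,p^m)$ descends formally to the $P_\phi$-specialization.
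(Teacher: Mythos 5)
Your proposal follows essentially the same route as the paper: the paper's entire argument is the observation that the statement is an easy consequence of \cite[Proposition 3.14]{Kim1} after choosing $n$ large enough that $(\omega_n(X),p^m)\subseteq(P_\phi,p^m)$, which is exactly your specialization step. The additional details you supply (the compatibility of the $+$-conditions under reduction of $v_+$ and the length count) are precisely what the paper leaves implicit in calling the deduction ``easy,'' so your write-up is a correct, slightly more explicit version of the same proof.
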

From the proposition we see that $H^1_+(\mathbb{Q}_p, A_{\phi,m})$ can be identified with $H^1_+(\mathbb{Q}_p, T_{\phi,m})$. We use the $\kappa_S^+$'s constructed above as the Kolyvagin system over $\Lambda$ for $(T, \mathcal{F}^+,\mathcal{L})$ ($\mathcal{L}$ is a set of degree two primes of $\mathcal{K}$ as in \cite[Section 1.2]{Howard}) in the sense of
\cite[Section 2.2]{Howard} (whose specializations to $\phi$'s are Kolyvagin systems defined in Section 1.2 of \emph{loc.cit}). The Kolyvagin system machinery thus gives the upper bound for Selmer groups in Theorem \ref{main Theorem}. Indeed Howard's argument carries out throughout by noting that at a prime $v$ above $p$, $H^1_+(\mathcal{K}_v, A_\phi)$ is contained in the inverse image of $\varinjlim_m E^+(\mathcal{K}_{m,v})\otimes\mathbb{Q}_p/\mathbb{Z}_p$ under
$$H^1(\mathcal{K}_v, A_\phi)\rightarrow H^1(\mathcal{K}_v,\mathbf{A}[P])\rightarrow H^1(\mathcal{K}_v,\mathbf{A})[P],$$
and, for every point $\phi\not=(p)$, $H^1_+(\mathcal{K}_v, A_\phi)\rightarrow H^1_+(\mathcal{K}_v, \mathbf{A}[\phi])$ has kernel and cokernel of cardinality bounded depending only on the ring structure of $\Lambda/P_\phi$.

\section{Lower Bound for Selmer Groups}
\subsection{Two-Variable Main Conjecture}

We recall the two-variable main conjecture proved in \cite{WAN2}.
As shown in [\emph{loc.cit.}, \S{4.6}], there is a $p$-adic $L$-function
$\mathcal{L}_{f,\mathcal{K}}\in\mathrm{Frac}(\hat{\mathbb{Z}}_p^{ur}[[\Gamma_{\mathcal{K}}]])$
such that for a Zariski dense set of arithmetic points $\phi\in\mathrm{Spec}\Lambda_\mathcal{K}$,
with $\phi\circ \boldsymbol{\varepsilon}$ the $p$-adic avatar of a Hecke character $\xi_\phi$ of $\mathcal{K}^\times\backslash \mathbf{A}_\mathcal{K}^\times$ of infinite type $(\frac{\kappa}{2},-\frac{\kappa}{2})$ ($\kappa\geq 6$) and conductor $(p^t,p^t)$ ($t>0$) at $p$, we have
\[
\phi(\mathcal{L}_{{f},\mathcal{K}})=\frac{p^{(\kappa-3)t}\xi_{1,p}^2\chi_{1,p}^{-1}
\chi_{2,p}^{-1}(p^{-t})\mathfrak{g}(\xi_{1,p}\chi_{1,p}^{-1})
\mathfrak{g}(\xi_{1,p}\chi_{2,p}^{-1})L(\mathcal{K},f,\xi_\phi,\frac{\kappa}{2})(\kappa-1)!(\kappa-2)!\Omega_p^{2\kappa}}{(2\pi i)^{2\kappa-1}\Omega_\infty^{2\kappa}},
\]
where $\mathfrak{g}$ denotes the Gauss sum, $\chi_{1,p}$ and $\chi_{2,p}$ are characters such that $\pi(\chi_{1,p},\chi_{2,p})\simeq \pi_{f,p}$,
and $\Omega_\infty$ and is a CM period attached to $\mathcal{K}$ and $\Omega_p$ is the corresponding $p$-adic period.
This $p$-adic $L$-function can also be constructed by Hida's Rankin--Selberg method \cite{Hida91}.
In fact Hida's construction gives an element in $\mathrm{Frac}(\Lambda_\mathcal{K})$ and the above $\mathcal{L}_{f,\mathcal{K}}$ is obtained by multiplying Hida's by a Katz $p$-adic $L$-function $\mathcal{L}^{Katz}_\mathcal{K}\in\hat{\mathbb{Z}}_p^{ur}[[\Gamma_\mathcal{K}]]$. Let
$$F_{d,2}=\varprojlim_m\sum_{\sigma\in U/p^mU}d_m^\sigma\cdot\sigma^2.$$
Then the discussion in \cite[Section 6.4]{LZ} on Katz $p$-adic $L$-functions (see also the discussion in Section 3.2 of \emph{loc.cit}) implies that $\mathcal{L}^{Katz}_\mathcal{K}/F_{d,2}$ is actually an element in $\mathbb{Z}_p[[\Gamma_\mathcal{K}]]\backslash\{0\}$. (The square comes from that fact that $\chi\mapsto\chi\chi^{-c}$ induces square map on anticyclotomic characters). We have the following lemma which follows from straightforward computation.
\begin{lemma}
$$\varprojlim_m\sum_{\sigma\in U/p^mU}d_m^\sigma\cdot\sigma^2\in (\varprojlim_m\sum_{\sigma\in U/p^mU}d_m^\sigma\cdot\sigma)^2\cdot\mathbb{Z}_p[[U]]^\times.$$
\end{lemma}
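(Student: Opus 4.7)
Let $\psi$ be the continuous $\mathbb{Z}_p$-algebra automorphism of $R := \hat{\mathbb{Z}}_p^{\mathrm{ur}}[[U]]$ induced by $\sigma \mapsto \sigma^2$ on $U$ (an automorphism because $p$ is odd). Reading off the definitions gives $F_{d,2} = \psi(F_{d,1})$ for $F_{d,1} := \varprojlim_m \sum_\sigma d_m^\sigma \sigma$ and $F_{d,2} := \varprojlim_m \sum_\sigma d_m^\sigma \sigma^2$, so the lemma amounts to $\psi(F_{d,1})/F_{d,1}^2 \in \mathbb{Z}_p[[U]]^\times$.

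The first step is to verify that $F_{d,1} \in R^\times$. Since $R \simeq \hat{\mathbb{Z}}_p^{\mathrm{ur}}[[Y]]$ is a regular local ring with maximal ideal $(p, Y)$, it suffices to check that $F_{d,1}$ has nonzero image in the residue field. Specializing $Y \mapsto 0$, $F_{d,1}$ maps to $\sum_\sigma \sigma(d_m) = \mathrm{Tr}_{k^m/\mathbb{Q}_p}(d_m) = d_0$ by trace-compatibility. Since $d \in \varprojlim_m \mathcal{O}_{k^m}^\times$ by construction, the zeroth layer $d_0$ lies in $\mathcal{O}_{\mathbb{Q}_p}^\times = \mathbb{Z}_p^\times$, so $F_{d,1}$ has unit image. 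Hence $F_{d,1}$, $F_{d,1}^2$, and $\psi(F_{d,1})$ are all units in $R$, and so is their ratio $\psi(F_{d,1})/F_{d,1}^2$.

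The second step is to descend the ratio into $\mathbb{Z}_p[[U]]$. Let $\varphi$ be the arithmetic Frobenius of $\hat{\mathbb{Z}}_p^{\mathrm{ur}}/\mathbb{Z}_p$, extended to $R$ through its action on coefficients. Since $\varphi$ restricts to $u := u_v$ on each $\mathcal{O}_{k^m}$ and $U$ is abelian, the reindexing $\tau = \sigma u$ yields the semi-invariance identity $\varphi(F_{d,1}) = u^{-1} \cdot F_{d,1}$. Consequently $\varphi(F_{d,1}^2) = u^{-2} F_{d,1}^2$ and, since $\varphi$ commutes with $\psi$, $\varphi(\psi(F_{d,1})) = \psi(u^{-1})\psi(F_{d,1}) = u^{-2}\psi(F_{d,1})$. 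The $u^{-2}$ factors cancel in the ratio, so it is $\varphi$-invariant; using $R^{\varphi=1} = \mathbb{Z}_p[[U]]$, the ratio lies in $\mathbb{Z}_p[[U]] \cap R^\times = \mathbb{Z}_p[[U]]^\times$, as required.

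The main subtlety is that neither $F_{d,1}$ nor $F_{d,2}$ individually belongs to $\mathbb{Z}_p[[U]]$: each is genuinely Frobenius-semi-invariant with nontrivial scaling. The argument must therefore combine the unit analysis inside the larger ring $R$ with the Frobenius descent of the specific combination $\psi(F_{d,1})/F_{d,1}^2$, which relies essentially on $\psi$ being the squaring operator on $U$ so that the semi-invariance factors of $F_{d,1}^2$ and $\psi(F_{d,1})$ both equal $u^{-2}$.
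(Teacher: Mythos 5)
Your proof is correct and complete: the identification $F_{d,2}=\psi(F_{d,1})$ with $\psi$ the squaring automorphism of $U$, the unit check for $F_{d,1}$ in the local ring $\hat{\mathbb{Z}}_p^{ur}[[U]]$ via its augmentation image $d_0\in\mathbb{Z}_p^\times$, and the semi-invariance $\varphi(F_{d,1})=u^{-1}F_{d,1}$ (so that both $F_{d,1}^2$ and $\psi(F_{d,1})$ scale by $u^{-2}$, making the ratio $\varphi$-invariant and hence an element of $\mathbb{Z}_p[[U]]\cap\hat{\mathbb{Z}}_p^{ur}[[U]]^\times=\mathbb{Z}_p[[U]]^\times$) all check out. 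The paper gives no argument at all for this lemma beyond calling it a straightforward computation, so there is nothing to contrast your route with; you have simply supplied the omitted computation, and the only cosmetic point is that $d_0\in\mathbb{Z}_p^\times$ can be justified without reading the notation $\varprojlim_m\mathcal{O}_{k^m}^\times$ literally, since $d$ generating $\varprojlim_m\mathcal{O}_{k^m}$ over $\mathbb{Z}_p[[U]]$ already forces its image under the bottom (augmentation) projection to generate $\mathbb{Z}_p$.
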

So there is an $\mathcal{L}_{f,\mathcal{K}}'\in\mathrm{Frac}\Lambda_\mathcal{K}$ such that
$$\mathcal{L}'_{f,\mathcal{K}}\cdot F_d^2=\mathcal{L}_{f,\mathcal{K}}.$$
\noindent The two variable main conjecture states that:
$$\mathrm{char}_{\Lambda_\mathcal{K}}(X_{f,\mathcal{K},v})=(\mathcal{L}_{f,\mathcal{K}}')$$
Where $X_{f,\mathcal{K},v}:=H_v^1(\mathcal{K}, T\otimes \Lambda_\mathcal{K}(\Psi_\mathcal{K})\otimes_{\Lambda_\mathcal{K}}\Lambda_\mathcal{K}^*)^*$.
The following is the part (ii) of the main theorem of \cite{WAN1}.
\begin{proposition}\label{3.2}
Under assumption (1) we have one containment $$\mathrm{char}_{\Lambda_\mathcal{K}\otimes\mathbb{Q}_p}(X_{f,\mathcal{K},v})\subseteq(\mathcal{L}_{f,\mathcal{K}}')$$
as fractional ideals of $\Lambda_\mathcal{K}\otimes_{\mathbb{Z}_p}\mathbb{Q}_p$.
\end{proposition}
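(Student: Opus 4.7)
The plan is essentially to invoke \cite{WAN1}, since Proposition 3.2 is stated as part (ii) of the main theorem of that paper. The first step is therefore to reconcile the normalizations. I would verify that the renormalized $L$-function $\mathcal{L}'_{f,\mathcal{K}}$ defined here via $\mathcal{L}_{f,\mathcal{K}} = \mathcal{L}'_{f,\mathcal{K}} \cdot F_{d,2}$ matches the Rankin--Selberg $p$-adic $L$-function of \emph{loc.\ cit.} By the preceding lemma we have $F_{d,2} \in F_d^2 \cdot \mathbb{Z}_p[[U]]^\times$, so after tensoring with $\mathbb{Q}_p$ the discrepancy is a unit and, up to the $\hat{\mathbb{Z}}_p^{\mathrm{ur}}$-factor which disappears upon descent, our $\mathcal{L}'_{f,\mathcal{K}}$ is precisely the $p$-adic $L$-function appearing as Hida's Rankin--Selberg element in \cite{WAN1}. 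Similarly, the $v$-relaxed, $\bar v$-strict Greenberg Selmer group $X_{f,\mathcal{K},v}$ is the Selmer module whose characteristic ideal is bounded there.

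To indicate what lies behind the citation, the strategy of \cite{WAN1} is an Eisenstein congruence on the unitary group $\mathrm{U}(3,1)_{/\mathcal{K}}$. One builds a Klingen-type $p$-adic Eisenstein family whose constant term along the Klingen parabolic is (a unit multiple of) $\mathcal{L}_{f,\mathcal{K}}$ times auxiliary $L$-factors, including a Katz $p$-adic $L$-function that accounts exactly for the factor $F_{d,2}$ by which $\mathcal{L}_{f,\mathcal{K}}$ differs from $\mathcal{L}'_{f,\mathcal{K}}$. Hida theory then produces a cuspidal family on $\mathrm{U}(3,1)$ congruent to the Eisenstein family modulo $\mathcal{L}'_{f,\mathcal{K}}$, and a Ribet-style lattice construction applied to the associated four-dimensional Galois representation, with a suitably non-semisimple residual reduction, yields Selmer classes annihilated by $\mathcal{L}'_{f,\mathcal{K}}$. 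Summing these contributions gives $\mathcal{L}'_{f,\mathcal{K}} \mid \mathrm{char}_{\Lambda_\mathcal{K}\otimes\mathbb{Q}_p}(X_{f,\mathcal{K},v})$, which is the claimed containment.

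The main obstacle in the underlying argument, and the reason it is a paper in its own right rather than a short lemma, is the Fourier--Jacobi coefficient computation needed to isolate $\mathcal{L}_{f,\mathcal{K}}$ from the Eisenstein datum. This requires a delicate choice of local sections at the ramified primes and at $p$, and it is where the supersingular hypothesis $a_p=0$ enters indirectly through the choice of local sections at $p$ compatible with the Panchishkin-type decomposition. Under assumption (1), the global root number being $-1$ forces a Jacquet--Langlands transfer to an auxiliary unitary group ramified at one finite place, which accounts both for the appearance of assumption (1) in the hypothesis and for the fact that the resulting divisibility is currently accessible only after inverting $p$, i.e.\ as fractional ideals of $\Lambda_{\mathcal{K}}\otimes_{\mathbb{Z}_p}\mathbb{Q}_p$ rather than of $\Lambda_\mathcal{K}$ itself.
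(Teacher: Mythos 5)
Your proposal takes essentially the same approach as the paper: Proposition \ref{3.2} is not proved here at all but simply quoted as part (ii) of the main theorem of \cite{WAN1} (with the normalization $\mathcal{L}'_{f,\mathcal{K}}$ reconciled via the lemma comparing $F_{d,2}$ with $F_d^2$), which is exactly your first step. Your additional sketch of the Eisenstein congruence method on $\mathrm{U}(3,1)$ is background to the cited work rather than part of the paper's argument, so nothing further is needed.
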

To apply the above proposition to our situation we need the following control result of Selmer groups. Let $X_{f,\mathcal{K},v}^{anti}$ be the anticyclotomic dual $v$-Selmer group defined as $X_{f,\mathcal{K},v}$ by replacing the Galois representation $T\otimes\Lambda_\mathcal{K}\otimes_{\Lambda_\mathcal{K}}\Lambda_\mathcal{K}^*$ by $T\otimes\Lambda\otimes_{\Lambda}\Lambda^*$. The proof is identical to \cite[Proposition 3.1]{WAN}.
\begin{proposition}\label{3.3}
Let $I$ be the ideal of $\Lambda_\mathcal{K}$ generated by $(\gamma^+-1)$. Under assumption (1) there is an isomorphism
$(X_{f,\mathcal{K},v}/IX_{f,\mathcal{K},v})\otimes_{\mathbb{Z}_p} \mathbb{Q}_p\simeq X_{f,\mathcal{K},v}^{anti}\otimes_{\mathbb{Z}_p} \mathbb{Q}_p$ of $\mathbb{Z}_p[[\Gamma_\mathcal{K}^-]]\otimes_{\mathbb{Z}_p} \mathbb{Q}_p$-modules. Under assumption (2) the above equality is true as $\Lambda\otimes_{\mathbb{Z}_p}\mathbb{Q}_p$-modules.
\end{proposition}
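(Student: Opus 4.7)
The plan is to imitate the control-theorem argument of \cite[Proposition 3.1]{WAN}. Write $\mathbf{A}_\mathcal{K} := T\otimes\Lambda_\mathcal{K}(-\Psi_\mathcal{K})\otimes_{\Lambda_\mathcal{K}}\Lambda_\mathcal{K}^*$ and $\mathbf{A}^{\mathrm{anti}} := T\otimes\Lambda(-\Psi)\otimes_\Lambda\Lambda^*$. The quotient $\Lambda_\mathcal{K}\twoheadrightarrow\Lambda$ by $I=(\gamma^+-1)$ yields a canonical injection $\mathbf{A}^{\mathrm{anti}}\hookrightarrow \mathbf{A}_\mathcal{K}[I]$ whose cokernel has bounded exponent and hence dies rationally. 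Pulling back local conditions and taking Pontryagin duals furnishes a natural map
$$\alpha : X_{f,\mathcal{K},v}/IX_{f,\mathcal{K},v} \longrightarrow X^{\mathrm{anti}}_{f,\mathcal{K},v},$$
and the task is to show that $\ker(\alpha)$ and $\mathrm{coker}(\alpha)$ are $\mathbb{Z}_p$-torsion of bounded exponent, so that both sides agree after $\otimes\mathbb{Q}_p$.

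First I would write out the snake-lemma diagram associated with the two Poitou--Tate exact sequences of Proposition~2.6 for the $v$-Selmer structure on $\mathbf{A}^{\mathrm{anti}}$ and on $\mathbf{A}_\mathcal{K}/I$. The global error terms reduce to $H^0(\mathcal{K},\mathbf{A}^{\mathrm{anti}})$ and $H^0(\mathcal{K},\mathbf{A}_\mathcal{K}[I])$, which vanish thanks to the big-image hypothesis $\mathrm{Im}(G_\mathcal{K})=\mathrm{GL}_2(\mathbb{Z}_p)$ (which forbids any $G_\mathcal{K}$-fixed line in $T\otimes\mathbb{F}_p$). At the split prime $v$ the local condition is relaxed, so the local comparison reduces to a $\Gamma^+$-coinvariant computation in unramified Iwasawa theory that introduces only torsion of bounded order; at $\bar v$ the condition is strict ($=0$) and specializes trivially under $\gamma^+\mapsto 1$.

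Consequently everything is reduced to the local contributions at the finitely many primes $w\nmid p$ of bad reduction. The error at such $w$ is governed by $H^0(\mathcal{K}_w^{\mathrm{ur}}/\mathcal{K}_w,(\mathbf{A}^{\mathrm{anti}})^{I_w})$ and the analogous $\Lambda_\mathcal{K}$-invariant, whose comparison is controlled by a local Euler-type factor. Under hypothesis (2) every $w\mid N$ is either split in $\mathcal{K}$ (so $w$ splits completely in $\mathcal{K}_\infty^-$ and the $\Gamma^+$-coinvariant produces no extra $\Lambda$-torsion) or is ramified with $E$ of non-split multiplicative reduction (forcing the local factor to be a unit in $\Lambda_\mathcal{K}\otimes\mathbb{Q}_p$), and this is exactly what affords the cleaner statement of case (2). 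Under hypothesis (1) an inert prime $q\mid N$ can produce a local $\Lambda$-torsion module annihilated by a power of $p$; harmless after inverting $p$ but obstructing an integral statement.

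The main obstacle will be the meticulous bookkeeping of these non-$p$-adic local error terms, together with checking that the ``$v$'' and ``$\bar v$'' conditions specialize correctly under $\gamma^+\mapsto 1$. The delicate point is the strict condition at $\bar v$: it must remain the exact orthogonal complement of the relaxed condition at $v$ after passing to the $I$-quotient, which forces one to apply the Poitou--Tate sequence carefully and to use Lemma~2.2-type freeness to identify the relevant cokernels. Once these verifications are in place, the snake-lemma diagram immediately yields the desired isomorphism after $\otimes\mathbb{Q}_p$, integrally in case (2) and only rationally in case (1).
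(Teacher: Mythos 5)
Your overall strategy is the same as the paper's: the paper proves this proposition by simply invoking the control-theorem argument of \cite[Proposition 3.1]{WAN}, i.e. exactly the comparison you outline — identify $\mathbf{A}^{\mathrm{anti}}$ with $\mathbf{A}_{\mathcal{K}}[I]$ (in fact this is an isomorphism, since $\Lambda_\mathcal{K}^*[I]\simeq(\Lambda_\mathcal{K}/I)^*$, so no bounded-exponent cokernel is needed), compare the two Selmer groups via the defining exact sequences, kill the global error terms using the big-image hypothesis, and reduce to local error terms at $\bar v$ and at the primes dividing $N$.

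Two points in your local analysis are off, and since the local bookkeeping is the entire content of the proposition they would need repair. First, your claim that a prime $w\mid N$ split in $\mathcal{K}$ ``splits completely in $\mathcal{K}_\infty^-$'' is backwards: it is the \emph{inert} primes that split completely in the anticyclotomic $\mathbb{Z}_p$-extension (the paper uses exactly this fact for the Kolyvagin primes in $B(E)$), while split primes are finitely decomposed there. Moreover, the control here is along the $\Gamma^+$-direction (quotient by $\gamma^+-1$), in which every prime $w\nmid p$ is finitely decomposed, so the relevant error terms are governed by $H^0$ of the local Galois groups acting on $\mathbf{A}_\mathcal{K}$ (equivalently by the local Euler factors at the non-split primes of $N$, which is where assumptions (1) and (2) actually enter), not by complete splitting in $\mathcal{K}_\infty^-$. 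Second, your concluding dichotomy ``integrally in case (2) and only rationally in case (1)'' misreads the statement: the proposition asserts an isomorphism after $\otimes_{\mathbb{Z}_p}\mathbb{Q}_p$ in both cases; the difference between the two hypotheses lies in which primes dividing $N$ can be non-split in $\mathcal{K}$ and hence which local terms must be estimated, not in whether the conclusion is integral.
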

Write $\mathcal{L}^-_{f,\mathcal{K}}$ for the image of $\mathcal{L}_{f,\mathcal{K}}'$ under
$$\mathbb{Z}_p[[\Gamma^+\times\Gamma^-]]\rightarrow \mathbb{Z}_p[[\Gamma_\mathcal{K}^-]], \gamma^+\rightarrow 1.$$
Then it is an element of $\mathbb{Z}_p[[\Gamma^-]]$ (for example by looking at another construction of it using Heegner points \cite{Hsieh}).
We have the following corollary.
\begin{corollary}
Under assumption (1) we have one containment $$\mathrm{char}_{\Lambda\otimes_{\mathbb{Z}_p}\mathbb{Q}_p}(X_{f,\mathcal{K},v}^{anti})\subseteq(\mathcal{L}^-_{f,\mathcal{K}})$$
as fractional ideals of $\Lambda\otimes_{\mathbb{Z}_p}\mathbb{Q}_p$.
\end{corollary}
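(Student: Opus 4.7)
The plan is to deduce this corollary from Proposition \ref{3.2} by specializing the two-variable statement along the ideal $I=(\gamma^+-1)\Lambda_\mathcal{K}$ and then identifying the quotient Selmer group via Proposition \ref{3.3}. Concretely, I would proceed in three stages.

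First, I would establish the ``specialization of characteristic ideals'' lemma tailored to our setting: if $M$ is a finitely generated $\Lambda_\mathcal{K}\otimes\mathbb{Q}_p$-module which is torsion, and if $M/IM$ is also torsion over $\Lambda\otimes\mathbb{Q}_p$, then the natural image of $\mathrm{char}_{\Lambda_\mathcal{K}\otimes\mathbb{Q}_p}(M)$ under the surjection $\Lambda_\mathcal{K}\otimes\mathbb{Q}_p\twoheadrightarrow\Lambda\otimes\mathbb{Q}_p$ is contained in $\mathrm{char}_{\Lambda\otimes\mathbb{Q}_p}(M/IM)$. This is standard (pick a pseudo-isomorphism $M\sim\bigoplus_i\Lambda_\mathcal{K}/(f_i)$, note that specialization is a well-behaved exact functor up to pseudo-null kernels after inverting $p$, and compare). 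The torsion hypothesis on $M=X_{f,\mathcal{K},v}\otimes\mathbb{Q}_p$ is guaranteed by Proposition \ref{3.2} since $\mathcal{L}_{f,\mathcal{K}}'\neq 0$, and the torsion hypothesis on $M/IM$ follows from Proposition \ref{3.3} together with standard rank-one torsionness of the anticyclotomic dual Selmer group (which is known unconditionally in this setup).

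Second, I apply this lemma to Proposition \ref{3.2}. The image of $\mathrm{char}_{\Lambda_\mathcal{K}\otimes\mathbb{Q}_p}(X_{f,\mathcal{K},v})$ under $\gamma^+\mapsto 1$ is contained in $\mathrm{char}_{\Lambda\otimes\mathbb{Q}_p}((X_{f,\mathcal{K},v}/IX_{f,\mathcal{K},v})\otimes\mathbb{Q}_p)$, while the image of $(\mathcal{L}_{f,\mathcal{K}}')$ is, by the definition given just before the corollary, the ideal $(\mathcal{L}^-_{f,\mathcal{K}})$ of $\Lambda\otimes\mathbb{Q}_p$. Invoking Proposition \ref{3.3} to replace $X_{f,\mathcal{K},v}/IX_{f,\mathcal{K},v}$ by $X_{f,\mathcal{K},v}^{anti}$ (valid after tensoring with $\mathbb{Q}_p$) immediately yields the desired containment
\[
\mathrm{char}_{\Lambda\otimes\mathbb{Q}_p}(X_{f,\mathcal{K},v}^{anti})\subseteq(\mathcal{L}^-_{f,\mathcal{K}}).
\]

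The main subtlety, and where I would spend the most care, is verifying that the specialization lemma really applies on the nose: one has to check that $\mathcal{L}^-_{f,\mathcal{K}}$ is nonzero in $\Lambda$ (so that the right-hand side is a genuine fractional ideal rather than the zero ideal, which would make the containment vacuous or ill-defined), and one has to control the pseudo-null $\Lambda_\mathcal{K}$-modules that might appear on going from $X_{f,\mathcal{K},v}$ to $X_{f,\mathcal{K},v}/IX_{f,\mathcal{K},v}$. Nonvanishing of $\mathcal{L}^-_{f,\mathcal{K}}$ is settled by the alternative Heegner-point construction referenced via \cite{Hsieh}, and the pseudo-null contributions are precisely what forces us to tensor with $\mathbb{Q}_p$ throughout; so after these two checks the argument is essentially a one-line specialization of Proposition \ref{3.2}.
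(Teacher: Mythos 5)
Your overall route is the right one and is essentially what the paper intends (the corollary is just Proposition \ref{3.2} specialized along $\gamma^+\mapsto 1$, combined with Proposition \ref{3.3} and the nonvanishing of $\mathcal{L}^-_{f,\mathcal{K}}$ via \cite{Hsieh}), but your key ``specialization of characteristic ideals'' lemma is stated with the containment reversed, and as written your second stage is a non sequitur. You assert that the image $\pi(\mathrm{char}_{\Lambda_\mathcal{K}\otimes\mathbb{Q}_p}(M))$ under $\pi:\Lambda_\mathcal{K}\otimes\mathbb{Q}_p\twoheadrightarrow\Lambda\otimes\mathbb{Q}_p$ is contained in $\mathrm{char}_{\Lambda\otimes\mathbb{Q}_p}(M/IM)$. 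This is false in general: take $M=(\Lambda_\mathcal{K}\otimes\mathbb{Q}_p)/(\gamma^+-1,\,g)$ with $g$ a nonzero nonunit of $\Lambda\otimes\mathbb{Q}_p$ (say $g=\gamma^--1$). Then $M$ is pseudo-null, so $\mathrm{char}(M)$ is the unit ideal and its image is all of $\Lambda\otimes\mathbb{Q}_p$, while $M/IM=(\Lambda\otimes\mathbb{Q}_p)/(g)$ has characteristic ideal $(g)$; note also that inverting $p$ does \emph{not} remove pseudo-null contributions, contrary to your parenthetical remark. Worse, even granting your statement, the conclusion does not follow: from $\pi(\mathrm{char}(X_{f,\mathcal{K},v}))\subseteq(\mathcal{L}^-_{f,\mathcal{K}})$ (Proposition \ref{3.2}) and $\pi(\mathrm{char}(X_{f,\mathcal{K},v}))\subseteq\mathrm{char}(X_{f,\mathcal{K},v}/IX_{f,\mathcal{K},v})$ one cannot deduce $\mathrm{char}(X_{f,\mathcal{K},v}/IX_{f,\mathcal{K},v})\subseteq(\mathcal{L}^-_{f,\mathcal{K}})$; you have only exhibited a common ideal inside both.

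The lemma you actually need goes the other way: if $M$ is finitely generated torsion over $\Lambda_\mathcal{K}\otimes\mathbb{Q}_p$ and $M/IM$ is torsion over $\Lambda\otimes\mathbb{Q}_p$, then $\mathrm{char}_{\Lambda\otimes\mathbb{Q}_p}(M/IM)\subseteq\pi\bigl(\mathrm{char}_{\Lambda_\mathcal{K}\otimes\mathbb{Q}_p}(M)\bigr)$; indeed, up to units one has $\mathrm{char}(M/IM)=\pi(\mathrm{char}(M))\cdot\mathrm{char}(M[I])$ (this is the standard specialization lemma, cf. the corresponding statements in \cite{SU}), so pseudo-null and $I$-torsion pieces only shrink the left-hand side, which is harmless for the divisibility you want. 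With the corrected direction the argument closes exactly as intended: $\mathrm{char}(X^{anti}_{f,\mathcal{K},v}\otimes\mathbb{Q}_p)=\mathrm{char}\bigl((X_{f,\mathcal{K},v}/IX_{f,\mathcal{K},v})\otimes\mathbb{Q}_p\bigr)\subseteq\pi(\mathrm{char}(X_{f,\mathcal{K},v}))\subseteq(\mathcal{L}^-_{f,\mathcal{K}})$, using Proposition \ref{3.3} for the first identification and Proposition \ref{3.2} for the last containment. Your two auxiliary checks (torsionness hypotheses and the nonvanishing of $\mathcal{L}^-_{f,\mathcal{K}}$) are correctly identified and are the only other inputs needed.
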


\subsection{Explicit Reciprocity Law for Heegner Points}
Recall that $\mathcal{L}_{f,\mathcal{K}}|_{\hat{\mathbb{Z}}_p^{ur}[[\Gamma^-]]}=\mathcal{L}_{f,\mathcal{K}}^-\cdot F_d^2$.
\begin{proposition}\label{3.1}
We have $$(\mathrm{LOG}^+_v(\kappa_1^+))^2=(\mathcal{L}_{f,\mathcal{K}}^-)$$
under assumption (2). This identity is true up to multiplying by a constant in $\bar{\mathbb{Q}}_p^\times$ under assumption (1).
\end{proposition}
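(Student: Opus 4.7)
The plan is to prove the identity after testing against a Zariski dense set of finite-order characters $\chi$ of $\Gamma^-$, and then to conclude by continuity. Both sides lie in $\Lambda\otimes\mathbb{Q}_p$ (or its fraction field up to the $F_d^2$ factor described in Section 3.1), so showing the equality $\chi(\mathrm{LOG}^+_v(\kappa_1^+))^2=\chi(\mathcal{L}^-_{f,\mathcal{K}})$ for $\chi$ ranging over a dense set of finite-order characters of sufficiently large, even $p$-power conductor will suffice.

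First, I would compute $\chi(\mathrm{LOG}^+_v(\kappa_1^+))$ via the interpolation formulas (\ref{Interpolation Formula}) and (\ref{equation (2)}). Writing $\kappa_1^+=\varprojlim_m\kappa_{1,m}^+$ with $\kappa_{1,m}^+=f_m\cdot a_m$ and unfolding through the map $\Gamma_1\twoheadrightarrow\Gamma_1^-$, one gets
\[
\chi(\mathrm{LOG}^+_v(\kappa_1^+))\cdot \mathfrak{g}(\chi|_{\Gamma})\cdot \chi(u)^n\cdot \Bigl(\sum_{u\in U_m}\chi(u)d_m^u\Bigr)=(-1)^{\frac{n+2}{2}}\chi(\omega_n^-(X))\sum_{\tau}\log_{\hat E}\bigl(\kappa_{1}^{+}\bigr)^{\tau}\chi(\tau),
\]
so everything is reduced to understanding the anticyclotomic sum of $p$-adic logarithms of the Heegner points on the right.

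Next, I would establish the supersingular analogue of Castella's explicit reciprocity law from \cite{Castella}: that the square of $\sum_\tau\log_{\hat E}(\kappa_1^+)^\tau\chi(\tau)$ equals, up to the explicit local factors (periods $\Omega_p^{2\kappa}$, $\Omega_\infty^{2\kappa}$, $(\kappa-1)!(\kappa-2)!$, powers of $2\pi i$) appearing in the interpolation of $\mathcal{L}_{f,\mathcal{K}}$, the central value $L(\mathcal{K},f,\xi_\phi,\kappa/2)$ for the Hecke character $\xi_\phi$ attached to $\chi$. This is a Bertolini--Darmon--Prasanna/Brooks-type formula; in the ordinary case Castella deduces it from Waldspurger/Ichino together with the $p$-adic deformation of the Waldspurger period, and the same strategy should work here, the only substantive modification being that the Coleman-style $p$-adic integration in the ordinary setting must be replaced by the Kobayashi-style $\pm$ $p$-adic integration built from the points $c_{n,z}$ of Section 2.1. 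After squaring, the Gauss sums $\mathfrak{g}(\chi|_{\Gamma})^2$ coming from $\mathrm{LOG}^+$ match the product $\mathfrak{g}(\xi_{1,p}\chi_{1,p}^{-1})\mathfrak{g}(\xi_{1,p}\chi_{2,p}^{-1})$ in the interpolation of $\mathcal{L}_{f,\mathcal{K}}$, and the two contributions $(\sum_u\chi(u)d_m^u)^2$ coming from (\ref{equation (2)}) match precisely the square $F_d^2$ in the relation $\mathcal{L}_{f,\mathcal{K}}=\mathcal{L}^-_{f,\mathcal{K}}\cdot F_d^2$ of Section 3.1. Collecting factors, one obtains $\chi(\mathrm{LOG}^+_v(\kappa_1^+))^2=\chi(\mathcal{L}^-_{f,\mathcal{K}})$ for a Zariski dense set of $\chi$, and density concludes.

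The main obstacle is the supersingular BDP/Castella-type formula. Under assumption (2) the relevant Shimura curve is a modular curve (see the Remark after the norm compatibility discussion in Section 2.2), so one can follow Castella's proof verbatim and track the finite Euler factors and transcendental periods, obtaining equality on the nose; under assumption (1) the Shimura curve is a genuine quaternionic one, the precise normalization of the Waldspurger test vector against the $p$-adic $L$-function is less transparent, and one only obtains the identity up to a nonzero constant in $\bar{\mathbb{Q}}_p^\times$, which accounts for the two cases in the statement. A secondary technical point is checking that the $\pm$-local integration of \cite{WAN1} is compatible with the CM evaluation required by the BDP formula; this reduces to the interpolation properties of the system $b_{n,m}$, $c_{n,m}$ already recorded above.
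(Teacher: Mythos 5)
There is a genuine gap at the crucial step, namely how you access $\chi(\mathcal{L}^-_{f,\mathcal{K}})$ at the finite-order anticyclotomic characters $\chi$ you test against. The interpolation formula for $\mathcal{L}_{f,\mathcal{K}}$ only covers characters $\xi_\phi$ of infinity type $(\frac{\kappa}{2},-\frac{\kappa}{2})$ with $\kappa\geq 6$; finite-order characters of $\Gamma^-$ lie outside this range, and at such characters the complex $L$-function has root number $-1$, so the central value you propose to match against vanishes identically. Consequently the ``supersingular BDP/Castella-type formula'' as you state it --- that the square of $\sum_\tau\log_{\hat E}(\kappa_1^+)^\tau\chi(\tau)$ equals the central value $L(\mathcal{K},f,\xi_\phi,\frac{\kappa}{2})$ attached to a finite-order $\chi$ --- is false: logarithms of Heegner points are related to derivatives of complex $L$-values (Gross--Zagier) or to values of a $p$-adic $L$-function \emph{outside} its interpolation range (BDP), never to the vanishing central values themselves. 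Your argument conflates the interpolation range (higher infinity type, complex $L$-values) with the geometric range (finite order, Heegner logarithms), and as written the key identity you would need cannot hold.

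The paper avoids exactly this by interposing the one-variable anticyclotomic $p$-adic $L$-function $\mathscr{L}_{\mathfrak{p},\psi}(f)$ of Castella--Hsieh (whose construction does not require ordinarity, so nothing BDP-like has to be reproved here): one first shows $\mathcal{L}_{f,\mathcal{K}}\vert_{\mathbb{Z}_p^{ur}[[\Gamma^-]]}=\mathscr{L}_{\mathfrak{p}}(f)^2$ up to units by comparing the two interpolation formulas at a Zariski dense set of higher-infinity-type points (where both interpolate nonvanishing central values), and then quotes the evaluation formula of \cite[Thm.~4.8]{castella-hsieh} giving $\phi(\mathscr{L}_{\mathfrak{p}}(f))$ at finite-order $\phi$ as a Gauss-sum-weighted sum of $\log_{\omega_f}$ of Heegner points. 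Only after this does one use the $\pm$-local formulas (\ref{Interpolation Formula}) and (\ref{equation (2)}) together with the defining relation $\omega_n^-\cdot\kappa_n^+=\pm\kappa_n$ to rewrite that sum as $\pm\phi(F_d)\cdot\phi(\mathrm{LOG}^+(\kappa_1^+))$, and the factor $F_d^2$ cancels against $\mathcal{L}_{f,\mathcal{K}}\vert_{\Gamma^-}=\mathcal{L}^-_{f,\mathcal{K}}\cdot F_d^2$. Your treatment of this last local step (formulas (\ref{Interpolation Formula}), (\ref{equation (2)}), the matching of $F_d^2$, and the explanation of the discrepancy between assumptions (1) and (2) via the quaternionic normalization) is in line with the paper; but the central reduction to complex $L$-values at finite-order characters is the wrong mechanism, and repairing it essentially forces you into the factorization-through-$\mathscr{L}_{\mathfrak{p}}(f)$ route the paper takes.
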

\begin{proof}
Let $\psi$ be an anticyclotomic Hecke character of infinity type $(1,-1)$ and of conductor prime-to-$p$,
let $\mathscr{L}_{\mathfrak{p},\psi}(f)\in\mathbb{Z}_p^{ur}[[\Gamma^-]]$ be the $p$-adic $L$-function in \cite[Def.~3.5]{castella-hsieh},
and set
\[
\mathscr{L}_{\mathfrak{p}}(f):={\rm Tw}_{\psi^{-1}}(\mathscr{L}_{\mathfrak{p},\psi}(f)),
\]
where ${\rm Tw}_{\psi^{-1}}:\mathbb{Z}_p^{ur}[[\Gamma^-]]\rightarrow\mathbb{Z}_p^{ur}[[\Gamma^-]]$ is the $\mathbb{Z}_p^{ur}$-linear
isomorphism given by $\gamma\mapsto\psi^{-1}(\gamma)\gamma$ for $\gamma\in\Gamma^-$. Thus
$\phi\left(\mathscr{L}_{\mathfrak{p}}(f)\right)=\chi\left(\mathscr{L}_{\mathfrak{p}}(f)\right)$,
where $\chi:=\psi^{-1}\phi$, and comparing their interpolation properties one easily checks that
\[
\mathcal{L}_{f,\mathcal{K}}\vert_{\mathbb{Z}_p^{ur}[[\Gamma^-]]}=\mathscr{L}_{\mathfrak{p}}(f)^2
\]
up to units. Following the calculations in \cite[Thm.~4.8]{castella-hsieh}, we find at that if $\phi$ is any primitive
character of $\Gamma^-/p^m\Gamma^-$ with $n:=m-t-a$ even, then
\[
\phi\left(\mathscr{L}_{\mathfrak{p}}(f)\right)=
\mathfrak{g}(\chi_\phi^{-1})\chi_{\phi}(p^n)p^{-n}
\sum_{\sigma\in{\rm Gal}(H_{p^n}/K)}\chi(\sigma){\rm log}_{\omega_f}(z_{p^n}^\sigma).
\]
Combined with the definition of $\kappa_n^+$ and formulas $(1)$ and $(2)$, we see that
\begin{align*}
\phi\left(\mathscr{L}_{\mathfrak{p}}(f)\right)&=
\mathfrak{g}(\chi_\phi^{-1})\chi_{\phi}(p^n)p^{-n}
\phi(\omega_n^-(X))\sum_{\sigma\in{\rm Gal}(H_{p^n}/K)}\chi(\sigma){\rm log}_{\hat{E}}((\kappa_n^+)^\sigma)(-1)^{n/2}\\
&=\pm\chi_\phi(p^n)\mathfrak{g}(\chi_\phi)^{-1}
\sum_{\sigma\in{\rm Gal}(H_{p^n}/K)}\chi(\sigma)\log_{\hat{E}}(c_n^\sigma)\cdot\phi\left({\rm Log}^+(\kappa_1^+)\right)\\
&=\pm\sum_{\sigma\in{\rm Gal}(H_{p^n}/K)}\chi(\sigma)d_n^\sigma\cdot\phi\left({\rm Log}^+(\kappa_1^+)\right)\\
&=\pm\phi(F_d)\cdot\phi\left({\rm Log}^+(\kappa_1^+)\right),
\end{align*}
and the result follows.
\end{proof}
\subsection{Proof for Lower Bound}
Before continuing with the proof let us briefly discuss the Selmer conditions at $\ell|N$. By our assumption that $N$ is square-free, $E$ has multiplicative reduction at $\ell$. Then we have the following easy facts (which can be deduced, for example from \cite[Section 1.3]{Rubin}). Let $v|\ell$ be a prime in $\mathcal{K}$ or $\mathcal{K}_m$ then
$$H^1(G_v, V)=0, H^1_\mathcal{F}(G_v, T)=H^1(G_v, T), H^1_\mathcal{F}(G_v, A)=0.$$
Starting from now we will use $\Lambda_{n,n'}$ for $\Lambda/(\omega_n((1+T)^{p^{t+a}}-1), p^{n'})$ and $T_{n,n'}=\mathbf{T}\otimes\Lambda_{n,n'}$. We also define $\Lambda_{n,n'}^+:=\Lambda/(\omega_n^+((1+T)^{p^{t+a}}-1), p^{n'})$.
\begin{proposition}
We have $H_{str}^1(\mathcal{K},\mathbf{T})=0$, and $X_{str}$ is torsion.
\end{proposition}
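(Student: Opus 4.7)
The plan is to combine the Poitou--Tate exact sequence of the preceding proposition with two key inputs already established: the non-vanishing of the $p$-adic image of the Heegner class $\kappa_1^+$ supplied by the explicit reciprocity law (Proposition \ref{3.1}), and the torsionness of the anticyclotomic $v$-dual Selmer group $X_{f,\mathcal{K},v}^{anti}$ supplied by the Corollary to Proposition \ref{3.3}.

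For the vanishing $H^1_{str}(\mathcal{K},\mathbf{T})=0$ I would apply Poitou--Tate to the pair $\mathrm{str}\subseteq\mathcal{F}^+$; since $\mathcal{F}^+$ is self-dual under local Tate pairing and $\mathrm{str}^{*}=\mathrm{rel}$ on $\mathbf{A}$, the sequence becomes
\[
0\to H^1_{str}(\mathcal{K},\mathbf{T})\to H^1_{\mathcal{F}^+}(\mathcal{K},\mathbf{T})\xrightarrow{\mathrm{loc}_p}\prod_{w\mid p}H^1_+(\mathcal{K}_w,\mathbf{T})\to X_{rel}\to X^+\to 0,
\]
identifying $H^1_{str}(\mathcal{K},\mathbf{T})$ with the kernel of $\mathrm{loc}_p$ on $H^1_{\mathcal{F}^+}(\mathcal{K},\mathbf{T})$. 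Lemma \ref{Lemma 2.2} combined with the irreducibility of $E[p]\vert_{G_\mathcal{K}}$ shows $H^1_{\mathcal{F}^+}(\mathcal{K},\mathbf{T})$ is $\Lambda$-torsion-free, and the Kolyvagin system upper bound from Section 2.3 together with the non-triviality of $\kappa_1^+$ pins its $\Lambda$-rank down to one. By Proposition \ref{3.1}, $\mathrm{LOG}^+_v(\kappa_1^+)^2=(\mathcal{L}^-_{f,\mathcal{K}})$ is nonzero, so $\mathrm{loc}_v(\kappa_1^+)$ is non-torsion inside the free rank-one $\Lambda$-module $H^1_+(\mathcal{K}_v,\mathbf{T})$. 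Hence the kernel of $\mathrm{loc}_v$ (and a fortiori of $\mathrm{loc}_p$) on a rank-one torsion-free $\Lambda$-module is itself $\Lambda$-torsion, and therefore zero.

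For $X_{str}$ being $\Lambda$-torsion I would observe that the strict Selmer condition and the $v$-Selmer condition differ only at the prime $v$, where $\mathrm{str}$ puts $0$ and $v$ puts the full $H^1$; thus $H^1_{str}(\mathcal{K},\mathbf{A})\hookrightarrow H^1_v(\mathcal{K},\mathbf{A})$, and Pontryagin dualization yields a surjection $X_{f,\mathcal{K},v}^{anti}\twoheadrightarrow X_{str}$. The Corollary to Proposition \ref{3.3} asserts $\mathrm{char}_{\Lambda\otimes\mathbb{Q}_p}(X_{f,\mathcal{K},v}^{anti})\subseteq(\mathcal{L}^-_{f,\mathcal{K}})$, which is a nonzero fractional ideal, so $X_{f,\mathcal{K},v}^{anti}$ is $\Lambda$-torsion, and therefore so is its quotient $X_{str}$.

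The substantive inputs are thus the non-vanishing of $\mathrm{loc}_v(\kappa_1^+)$ coming from the explicit reciprocity law and the torsionness of $X_{f,\mathcal{K},v}^{anti}$ coming from the two-variable Iwasawa--Greenberg upper bound; the main obstacle is the bookkeeping of orthogonal complements of $\mathrm{str}$, $\mathcal{F}^+$, $\mathrm{rel}$, and the $v$-Selmer condition under local Tate pairing, and keeping track of which Selmer group lives on $\mathbf{T}$ versus on $\mathbf{A}$.
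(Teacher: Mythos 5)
Your first half is essentially the intended route (it is what the reduction to \cite[Lemma 3.4]{WAN} amounts to): identify $H^1_{str}(\mathcal{K},\mathbf{T})$ with the kernel of localization at $p$ on the torsion-free, rank-one module $H^1_{\mathcal{F}^+}(\mathcal{K},\mathbf{T})$ and exhibit one class with non-trivial localization. The one weak point is that your only justification for $\mathrm{loc}_v(\kappa_1^+)\neq 0$ is Proposition \ref{3.1} together with the bare assertion that $\mathcal{L}^-_{f,\mathcal{K}}\neq 0$. That non-vanishing is nowhere established in the paper and is not free: the non-torsionness of $\kappa_1^+$ from \cite{CV} does not imply non-vanishing of $\mathrm{LOG}^+_v(\mathrm{loc}_v\kappa_1^+)$, and the (easy) non-vanishing of the two-variable $\mathcal{L}_{f,\mathcal{K}}$ does not control its restriction to the anticyclotomic line. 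You need to invoke the non-vanishing of the anticyclotomic (BDP-type) $p$-adic $L$-function, e.g.\ the results of \cite{Hsieh}; with that citation added (and noting that the up-to-constant form of Proposition \ref{3.1} under assumption (1) suffices for non-vanishing), this half stands.

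The second half has a genuine gap. From the Corollary you only have the containment $\mathrm{char}_{\Lambda\otimes\mathbb{Q}_p}(X^{anti}_{f,\mathcal{K},v})\subseteq(\mathcal{L}^-_{f,\mathcal{K}})$, and this cannot yield that $X^{anti}_{f,\mathcal{K},v}$ is torsion: with the usual convention the characteristic ideal of a non-torsion module is the zero ideal, and $0\subseteq(\mathcal{L}^-_{f,\mathcal{K}})$ holds trivially, so the containment says nothing about torsionness no matter whether $\mathcal{L}^-_{f,\mathcal{K}}$ vanishes. Conceptually, Proposition \ref{3.2} and its corollary are the ``lower bound for the Selmer group'' direction of the main conjecture and can never produce an upper-bound statement such as torsionness; and even if the two-variable module were known to be $\Lambda_\mathcal{K}$-torsion, descending along Proposition \ref{3.3} would further require that $(\gamma^+-1)$ not divide its characteristic ideal, which the stated divisibility does not give. (Note also that the Corollary is stated only under assumption (1), while the present proposition is needed under both assumptions.) Your surjection $X^{anti}_{f,\mathcal{K},v}\twoheadrightarrow X_{str}$ is correct, but the torsionness of its source is essentially equivalent to the statements being proved in this part of the paper (by Poitou--Tate it is equivalent to the vanishing, up to torsion, of $H^1_{\bar v}(\mathcal{K},\mathbf{T})$, which is the content of the lemma proved just below) and is not available as a prior input. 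The paper instead obtains the torsionness of $X_{str}$ as in \cite[Corollary 3.6]{WAN}: from the freeness of $H^1_+(\mathcal{K},\mathbf{T})$ of rank one, the rank-one structure of $X^+$ coming from the Kolyvagin-system machinery, Poitou--Tate, and the non-triviality of the localizations of $\kappa_1^+$ at $v$ and (by conjugation symmetry) at $\bar v$; your second paragraph needs to be replaced by an argument of that kind rather than an appeal to the main-conjecture divisibility.
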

\begin{proof}
We have seen in the last section that $H_+^1(\mathcal{K},\mathbf{T})$ is free of rank one over $\Lambda$ and then the argument is completely the same as \cite[Lemma 3.4, Corollary 3.6]{WAN}.
\end{proof}
We write $H^1_{+,\mathrm{rel}}(\mathcal{K}, \mathbf{T})$ for the Selmer group whose local restrictions at primes not dividing $p$ the same as $\mathcal{F}^+$ and at $p=v\bar{v}$ is $+$ at $v$ and no restriction at $\bar{v}$.
\begin{lemma}
We have an isomorphism
$$H_+^1(\mathcal{K},\mathbf{T})\simeq H^1_{+,\mathrm{rel}}(\mathcal{K},\mathbf{T}).$$
\end{lemma}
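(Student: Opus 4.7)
The inclusion $H^1_+(\mathcal{K},\mathbf{T})\subseteq H^1_{+,\mathrm{rel}}(\mathcal{K},\mathbf{T})$ is tautological since the two structures agree at every prime except $\bar v$, where the $+$ condition is a restriction of the relaxed one. The plan is to apply the Poitou--Tate exact sequence (Proposition 2.3) to $(\mathcal{F},\mathcal{G})=(\mathcal{F}^+, (+,\mathrm{rel}))$. By the definition of the $+$ condition for $\mathbf{A}$ at $p$-adic places as the Tate-annihilator of its $\mathbf{T}$-analogue, the dual Selmer structures are $(\mathcal{F}^+)^*=\mathcal{F}^+$ and $(+,\mathrm{rel})^*=\mathcal{F}^+_v$ (strict at $\bar v$, $+$ at $v$), yielding the five-term sequence
\[
0\to H^1_+(\mathcal{K},\mathbf{T})\to H^1_{+,\mathrm{rel}}(\mathcal{K},\mathbf{T})\xrightarrow{\,\mathrm{loc}_{\bar v}\,}\frac{H^1(\mathcal{K}_{\bar v},\mathbf{T})}{H^1_+(\mathcal{K}_{\bar v},\mathbf{T})}\to X^+\to X_v^+\to 0.
\]

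The middle quotient is $\Lambda$-free of rank one by the local theory of Section 2.1, so $H^1_{+,\mathrm{rel}}/H^1_+$ embeds in a torsion-free $\Lambda$-module. Hence it is enough to prove $\mathrm{rank}_\Lambda(H^1_{+,\mathrm{rel}}/H^1_+)=0$. Using that $H^1_+(\mathcal{K},\mathbf{T})$ is $\Lambda$-free of rank one (as recalled in the proof of the previous proposition), the alternating sum of $\Lambda$-ranks in the above exact sequence reduces the problem to establishing
\[
\mathrm{rank}_\Lambda X^+-\mathrm{rank}_\Lambda X_v^+=1.
\]

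For this rank equality the surjection $X^+\twoheadrightarrow X_v^+$ already yields the inequality $\geq 0$; to obtain equality with $1$ I would pin down each side separately. For $\mathrm{rank}_\Lambda X^+=1$ I would invoke the Kolyvagin-system upper bound of Section 2, which together with the non-torsion Heegner class $\kappa_1^+$ matches the expected rank. For $X_v^+$ being $\Lambda$-torsion I would mirror the proof of the previous proposition, now with the $+$ (rather than strict) condition at $v$: using the freeness of $H^1_+(\mathcal{K}_v,\mathbf{T})$ as a rank-one $\Lambda$-module and the non-triviality of $\mathrm{loc}_v(\kappa_1^+)$, which follows from the explicit reciprocity law of Proposition 3.1 together with the non-vanishing of $\mathcal{L}^-_{f,\mathcal{K}}$.

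The main obstacle is the torsion-ness of $X_v^+$. The formal strict-case argument of the previous proposition relied on the outright vanishing $H^1_{\mathrm{str}}(\mathcal{K},\mathbf{T})=0$; in the present mixed situation, the corresponding input must instead be a global duality statement supplied by the non-triviality of $\mathrm{loc}_v(\kappa_1^+)$. Making this replacement precise --- without circularly invoking the main theorem we are ultimately trying to prove --- is the delicate step; once it is carried out, the rank equality and therefore the lemma follows from the Poitou--Tate sequence above.
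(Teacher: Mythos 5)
Your setup coincides with what the paper actually does: the paper's proof is essentially a citation of \cite[Lemma 3.7]{WAN}, with the one added remark that the cokernel of $H^1_+(\mathcal{K},\mathbf{T})\hookrightarrow H^1_{+,\mathrm{rel}}(\mathcal{K},\mathbf{T})$ embeds into $H^1(\mathcal{K}_{\bar v},\mathbf{T})/H^1_+(\mathcal{K}_{\bar v},\mathbf{T})$, which is torsion-free (indeed free of rank one by the local $\pm$-theory of Section 2.1). Your Poitou--Tate sequence and the bookkeeping reducing the lemma to $\mathrm{rank}_\Lambda X^+-\mathrm{rank}_\Lambda X^+_v=1$ are correct, and invoking the Kolyvagin-system result of Section 2 for $\mathrm{rank}_\Lambda X^+=1$ is not circular, since Section 2 is independent of this lemma.

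The problem is that the decisive input --- that $X^+_v$ is $\Lambda$-torsion, equivalently that the cokernel is torsion --- is exactly the content of the lemma, and you do not prove it; you yourself flag it as ``the delicate step'' left to be carried out, so the proposal is not a complete proof. Moreover, the route you sketch has a $v$ versus $\bar v$ mismatch: $X^+_v$ is cut out by the \emph{strict} condition at $\bar v$, so what controls its corank is the localization at $\bar v$ of the divisible part of $H^1_{\mathcal{F}^+}(\mathcal{K},\mathbf{A})$, not the nonvanishing of $\mathrm{loc}_v(\kappa_1^+)$ on the compact side; to exploit Proposition \ref{3.1} one must pass through the conjugation/$\iota$-symmetry and then transfer a $\Lambda$-adic nonvanishing statement for $\mathbf{T}$ into a statement about the corank-one part of the discrete Selmer group, neither of which is addressed. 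A cleaner way to close the gap with ingredients already available at this point of the paper is to note that $H^1_{\mathcal{F}_v^+}(\mathcal{K},\mathbf{A})\subseteq H^1_{(v)}(\mathcal{K},\mathbf{A})$, so $X^+_v$ is a quotient of $X^{anti}_{f,\mathcal{K},v}$, which is torsion by Propositions \ref{3.2} and \ref{3.3} together with $\mathcal{L}^-_{f,\mathcal{K}}\neq 0$; alternatively, follow the paper and import the argument of \cite[Lemma 3.7]{WAN} verbatim, which draws the needed torsionness from the structural facts already established (freeness and rank one of $H^1_+(\mathcal{K},\mathbf{T})$ and the preceding proposition on the strict Selmer group) rather than from the reciprocity law. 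As written, your argument establishes the reduction but not the lemma.
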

\begin{proof}
Same as \cite[Lemma 3.7]{WAN}. The surjectivity follows from that the cokernel is embedded to $H^1(\mathcal{K}_{\bar{v}_0}, \mathbf{T})/H^1_+(\mathcal{K}_{\bar{v}}, \mathbf{T})$, which is torsion free.
\end{proof}
\begin{lemma}
We have $H_{\bar{v}}^1(\mathcal{K},\mathbf{T})$ is $0$.
\end{lemma}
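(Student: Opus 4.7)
The strategy is to realize $H^1_{\bar{v}}(\mathcal{K},\mathbf{T})$ as the kernel of the localization map $\mathrm{loc}_v\colon H^1_+(\mathcal{K},\mathbf{T}) \to H^1_+(\mathcal{K}_v,\mathbf{T})$, and then show this kernel is simultaneously $\Lambda$-torsion and $\Lambda$-torsion-free.

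First I would apply the Poitou--Tate proposition of Section 2 to the inclusion of Selmer structures $\bar{v} \subseteq (+,\mathrm{rel})$. These conditions agree at every place outside $v$, where the quotient of local conditions is $H^1_+(\mathcal{K}_v,\mathbf{T})$, yielding
\[
0 \to H^1_{\bar{v}}(\mathcal{K},\mathbf{T}) \to H^1_{+,\mathrm{rel}}(\mathcal{K},\mathbf{T}) \xrightarrow{\mathrm{loc}_v} H^1_+(\mathcal{K}_v,\mathbf{T}) \to \cdots.
\]
The preceding lemma identifies $H^1_{+,\mathrm{rel}}(\mathcal{K},\mathbf{T})$ with $H^1_+(\mathcal{K},\mathbf{T})$, so $H^1_{\bar{v}}(\mathcal{K},\mathbf{T}) = \ker(\mathrm{loc}_v|_{H^1_+(\mathcal{K},\mathbf{T})})$.

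For torsion-freeness, Lemma \ref{Lemma 2.2} with trivial conductor shows that $H^1(\mathcal{K},\mathbf{T})$ is a free $\Lambda$-module, so its submodule $H^1_{\bar{v}}(\mathcal{K},\mathbf{T})$ is $\Lambda$-torsion-free. For torsion-ness, the explicit reciprocity law (Proposition \ref{3.1}) gives $\mathrm{LOG}^+_v(\kappa_1^+)^2 = \mathcal{L}^-_{f,\mathcal{K}}$ up to a unit (respectively, up to a factor in $\bar{\mathbb{Q}}_p^\times$ under assumption~(1)), which is nonzero; hence $\mathrm{loc}_v(\kappa_1^+)$ is non-torsion in the free rank-one $\Lambda$-module $H^1_+(\mathcal{K}_v,\mathbf{T})$. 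Combined with the upper bound $\mathrm{rank}_\Lambda H^1_+(\mathcal{K},\mathbf{T}) \leq 1$, obtained from the Kolyvagin system upper bound on $X^+$ of Section 2 via Poitou--Tate duality and the global Euler characteristic formula, the map $\mathrm{loc}_v$ is a morphism between rank-one $\Lambda$-modules with nonzero image, hence has $\Lambda$-torsion kernel.

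Being both $\Lambda$-torsion and $\Lambda$-torsion-free, the module $H^1_{\bar{v}}(\mathcal{K},\mathbf{T})$ must vanish. The main obstacle is the rank bound $\mathrm{rank}_\Lambda H^1_+(\mathcal{K},\mathbf{T}) \leq 1$, which rests on the Kolyvagin system upper bound for $X^+$ already established in Section 2; once that is in hand, the rest of the argument is a formal consequence of the preceding lemma, Lemma \ref{Lemma 2.2}, and Proposition \ref{3.1}.
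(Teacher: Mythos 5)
Your formal reductions are sound and consistent with the paper's set-up: by the preceding lemma $H^1_{+,\mathrm{rel}}(\mathcal{K},\mathbf{T})=H^1_{+}(\mathcal{K},\mathbf{T})$, so $H^1_{\bar v}(\mathcal{K},\mathbf{T})$ is the kernel of $\mathrm{loc}_v$ restricted to $H^1_{+}(\mathcal{K},\mathbf{T})$, which the paper has already observed is free of rank one over $\Lambda$ (so you do not really need your Euler-characteristic detour for the rank bound), and both this module and $H^1_{+}(\mathcal{K}_v,\mathbf{T})$ are torsion-free. (A minor imprecision: Lemma \ref{Lemma 2.2} concerns $H^1(\mathcal{K}[S],T\otimes\Lambda(\Psi))$, so ``trivial conductor'' gives the ring class field of conductor one, not $\mathcal{K}$ itself; torsion-freeness over $\mathcal{K}$ follows from the same irreducibility argument, or simply from the freeness of $H^1_+(\mathcal{K},\mathbf{T})$ already quoted in the paper.) After these reductions the whole content of the lemma is that $\mathrm{loc}_v$ is not identically zero on $H^1_{+}(\mathcal{K},\mathbf{T})$.

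The genuine gap is precisely at that crux: you deduce $\mathrm{loc}_v(\kappa_1^+)\neq 0$ from Proposition \ref{3.1} together with the bare assertion that $\mathcal{L}^-_{f,\mathcal{K}}$ ``is nonzero''. That nonvanishing is established nowhere in the paper, and after your reductions it is (via Proposition \ref{3.1} and the freeness of the local and global modules) equivalent to the statement you are proving, so as written the argument begs the question. To make your route work you must import an independent nonvanishing theorem for the anticyclotomic BDP-type $p$-adic $L$-function $\mathscr{L}_{\mathfrak{p}}(f)$ (e.g.\ the nontriviality results of Hsieh \cite{Hsieh} underlying \cite{castella-hsieh}); with such a citation your argument becomes correct, but it is then a genuinely different and heavier route than the paper's. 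The paper simply invokes \cite[Lemma 3.8]{WAN}, whose (ordinary-case) argument is purely algebraic: roughly, if $\mathrm{loc}_v$ annihilated the rank-one module $H^1_{+}(\mathcal{K},\mathbf{T})$, then applying complex conjugation, which interchanges $v$ and $\bar v$ and twists the $\Lambda$-structure by $\iota$, would force $H^1_{+}(\mathcal{K},\mathbf{T})\subseteq H^1_{\mathrm{str}}(\mathcal{K},\mathbf{T})=0$, contradicting the Cornut--Vatsal nontriviality of $\kappa_1^+$; no $L$-function nonvanishing is needed. You should either supply the explicit nonvanishing citation or switch to this algebraic argument, which uses only ingredients already proved in the paper.
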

\begin{proof}
Same as \cite[Lemma 3.8]{WAN}.
\end{proof}
The following proposition is the key of the whole argument. This is an analogue of \cite[Proposition 3.7]{WAN}. But we have to use a different argument in the $\pm$ setting.
\begin{definition}
We define $\mathrm{Sel}^+:=H^1_+(\mathcal{K}, \mathbf{A})$ and write $\mathrm{Sel}^+_{\mathrm{cofree}}$ for the co-torsionfree part of it.
\end{definition}
\begin{proposition}\label{3.9}
Consider the map
$$\Lambda\simeq \varinjlim_n(E^+(\mathcal{K}_{n,v})\otimes\mathbb{Q}_p/\mathbb{Z}_p)^*\rightarrow (\mathrm{Sel}_{\mathrm{cofree}}^+)^*.$$
which is the Pontryagin dual of the localization map.
Then for any height one prime $P$ of $\Lambda$ as above, we localize the above map at $P$ and write this $\Lambda_P$-module map as $j_{P,v}$. We also define
$$f_{v,P}=\exp_P(\mathrm{coker}\{H^1_+(\mathcal{K}, \mathbf{T})_P\rightarrow H^1_+(\mathcal{K}_v, \mathbf{T})_P\}.$$ Then:
$$\mathrm{exp}_P(\mathrm{coker}j_{P^\iota,v})=f_{v,P}.$$
We can define $j_{P,\bar{v}}$ similarly and have $\mathrm{exp}_P(\mathrm{coker}j_{P^\iota,\bar{v}})=f_{\bar{v},P}$.
\end{proposition}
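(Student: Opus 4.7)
My plan is to combine the Poitou--Tate exact sequence of the Proposition above with local Tate duality at $v$, carefully tracking the $\iota$-twist intrinsic to the Iwasawa pairing between $\mathbf{T}$ and $\mathbf{A}$.

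First, I apply Poitou--Tate with the pair of Selmer structures $\mathcal{F}=\mathcal{F}_{\bar v}^+$ (strict at $v$, $+$ at $\bar v$) and $\mathcal{G}=\mathcal{F}^+$. Since $\mathcal{G}/\mathcal{F}$ at $v$ equals $H^1_+(\mathcal{K}_v,\mathbf{T})$ and the dual conditions on $\mathbf{A}$ give $\mathcal{G}^*\leftrightarrow\mathrm{Sel}^+$ and $\mathcal{F}^*$ equal to relaxed at $v$ and $+$ at $\bar v$, the resulting five-term sequence
\begin{equation*}
H^1_+(\mathcal{K},\mathbf{T})\xrightarrow{\mathrm{loc}_v}H^1_+(\mathcal{K}_v,\mathbf{T})\to H^1_{\mathrm{rel}_v,+_{\bar v}}(\mathcal{K},\mathbf{A})^*\xrightarrow{\pi}X^+\to 0
\end{equation*}
identifies $\mathrm{coker}(\mathrm{loc}_v)\simeq\ker(\pi)$. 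Dualizing the global short exact sequence $0\to\mathrm{Sel}^+\to H^1_{\mathrm{rel}_v,+_{\bar v}}(\mathcal{K},\mathbf{A})\to I_v\to 0$, where $I_v$ is the image of $v$-localization in $H^1(\mathcal{K}_v,\mathbf{A})/H^1_+(\mathcal{K}_v,\mathbf{A})$, yields $\ker(\pi)\simeq I_v^\vee$. Hence $\mathrm{coker}(\mathrm{loc}_v^{\mathbf{T}})\simeq I_v^\vee$ as $\Lambda$-modules.

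Second, local Tate duality at $v$ exhibits $H^1(\mathcal{K}_v,\mathbf{A})/H^1_+(\mathcal{K}_v,\mathbf{A})$ as the Pontryagin dual of $H^1_+(\mathcal{K}_v,\mathbf{T})\simeq\Lambda$, with the identification intertwining the $\Lambda$-actions via $\iota$. Thus $I_v\hookrightarrow\Lambda^{\vee,\iota}$ and $I_v^\vee$ is a cyclic quotient of $\Lambda^\iota$. On the $\mathbf{A}$-side, $j_v$ is by construction the Pontryagin dual of $\mathrm{loc}_v^{\mathbf{A}}:\mathrm{Sel}^+\to H^1_+(\mathcal{K}_v,\mathbf{A})\simeq\Lambda^\vee$ composed with the projection $X^+\twoheadrightarrow(\mathrm{Sel}^+_\mathrm{cofree})^*$; unraveling, $\mathrm{coker}(j_v)$ is a cyclic quotient of $\Lambda=(\mathrm{Sel}^+_\mathrm{cofree})^*$. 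A characteristic-ideal bookkeeping using the dual sequence for $J_v^\vee:=(\ker\mathrm{loc}_v^{\mathbf{A}})^\vee$ sitting inside $X^+$, combined with the Poitou--Tate comparison relating $X^+$ and $Y=H^1_{\mathrm{rel}_v,+_{\bar v}}(\mathcal{K},\mathbf{A})^*$, shows that the characteristic ideal of $\mathrm{coker}(j_v)$ is precisely the $\iota$-image of that of $I_v^\vee$ at every height-one prime.

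Finally, localizing $j_v$ at $P^\iota$ corresponds via the $\iota$-twist to localizing $I_v^\vee$ at $P$ with length preserved, giving
\begin{equation*}
\exp_P(\mathrm{coker}\,j_{P^\iota,v})=\mathrm{length}_{\Lambda_P}\bigl((I_v^\vee)_P\bigr)=\exp_P\bigl(\mathrm{coker}(\mathrm{loc}_v^{\mathbf{T}})\bigr)_P=f_{v,P},
\end{equation*}
and the statement for $\bar v$ follows by symmetry. The hard part will be the careful bookkeeping of the $\iota$-twist through Tate local duality and Pontryagin duality, making sure that the $P\leftrightarrow P^\iota$ swap is applied exactly once; a secondary subtlety is the passage from $X^+$ to $(\mathrm{Sel}^+_\mathrm{cofree})^*$, handled by noting that $\mathrm{Sel}^+_\mathrm{cotor}$ is dual to $X^+_\mathrm{tor}$ and therefore contributes only to torsion terms on both sides, together with the rank-one freeness of $H^1_+(\mathcal{K}_v,\mathbf{T})$ and the vanishing of $H^1_{\mathrm{str}^+}(\mathcal{K},\mathbf{T})$ established just before.
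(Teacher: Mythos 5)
Your opening step is fine: taking $\mathcal{F}=\mathcal{F}_{\bar v}^+\subseteq\mathcal{G}=\mathcal{F}^+$ in Poitou--Tate does identify $\mathrm{coker}\bigl(H^1_+(\mathcal{K},\mathbf{T})\to H^1_+(\mathcal{K}_v,\mathbf{T})\bigr)$ with the Pontryagin dual of the image $I_v$ of the relaxed-at-$v$ Selmer group in $H^1(\mathcal{K}_v,\mathbf{A})/H^1_+(\mathcal{K}_v,\mathbf{A})$. But that is not yet the proposition. What must be proved is that this module has, at every height-one prime and up to the $\iota$-swap, the same length as $\mathrm{coker}(j_v)$, i.e.\ as the dual of $\ker\bigl(\mathrm{loc}_v:\mathrm{Sel}^+_{\mathrm{cofree}}\to\varinjlim_n E^+(\mathcal{K}_{n,v})\otimes\mathbb{Q}_p/\mathbb{Z}_p\bigr)$. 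These are two genuinely different modules (one measures how far global compact $+$-classes fall short of filling the local $+$-line; the other measures global discrete $+$-classes that die at $v$), and no formal combination of global duality and local Tate duality identifies their characteristic ideals: the sentence beginning ``A characteristic-ideal bookkeeping\ldots shows that the characteristic ideal of $\mathrm{coker}(j_v)$ is precisely the $\iota$-image of that of $I_v^\vee$'' is exactly the content of Proposition \ref{3.9} and is asserted rather than proved. This is a reflection-type statement, and the paper obtains it by hard, non-formal means: for each height-one prime $P=(g_P)$ one introduces auxiliary primes $\mathfrak{D}=(g_P+p^m)$ (and $(p+T^m)$ when $P=(p)$), passes to the finite quotients $T_{n,n'}$, and proves the key identity $\mathcal{A}^+_{n,n'}=\omega_n^-((1+T)^{p^{t+a}}-1)\cdot\mathrm{Im}\bigl(H^1_+(\mathcal{K},\mathbf{T})\to H^1(\mathcal{K},T_{n,n'})\bigr)$, i.e.\ that the divisible part of the $+$-Selmer group at finite level comes from the compact module $H^1_+(\mathcal{K},\mathbf{T})$; establishing the local $+$-membership of the lifted classes requires the explicit $\pm$-theory (the description $E^+(\mathcal{K}_{m,v})\simeq\Lambda_m^+$, the Tate-pairing computation $\langle y,z\rangle=\mathrm{tr}\,\log_{\hat E}z\cdot\exp^*_{\omega_E}y$, the $\omega_n^\pm$ manipulations, and torsion-freeness of $H^1/H^1_+$). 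One then counts cardinalities $\approx p^{md}$ versus $p^{md'}$ and lets $m\to\infty$ to get $d=d'$. None of this has a counterpart in your outline, and it cannot be replaced by duality bookkeeping, in particular because the $+$-condition at $p$ is not a Greenberg-type condition to which standard Selmer-complex functional equations apply; note also that the $\iota$-twist in the paper enters through the Galois action on $\mathbf{A}$ being that on $\mathbf{T}$ composed with $\iota$, not through the local duality pairing alone.

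A secondary inaccuracy: dismissing the passage from $X^+$ to $(\mathrm{Sel}^+_{\mathrm{cofree}})^*$ on the grounds that the cotorsion part ``contributes only to torsion terms on both sides'' is not a valid reduction here, since the proposition is precisely an equality of lengths of torsion modules at each height-one prime; controlling that contribution is again done in the paper through the surjection $\Lambda^+_{n,n'}\twoheadrightarrow\mathcal{A}^+_{n,n'}$ with kernel bounded independently of $m,n,n'$, which your argument does not supply.
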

\begin{proof}
For any height one prime $P\not=(p)$ generated by $g_P$. As in \cite{WAN} we take $g_\mathfrak{D}=g_P+p^m$ for $m>>0$ which generates another height one prime $\mathfrak{D}$. Recall both $H_+^1(\mathcal{K},\mathbf{T})$ and $H_+^1(\mathcal{K}_v,\mathbf{T})$ ($p=v\bar{v}$) are free rank one modules over $\Lambda$. Let $f$ be the image of $1$ under the non-zero map $\Lambda\rightarrow\Lambda$ upon the above identification. Note that for all but finitely many $m$ we have $\sharp (\Lambda/(f,g_\mathfrak{D}))<\infty$. Fix $\mathfrak{D}$, we choose $n>>0$ such that $\omega_n^+((1+T)^{p^{t+a}}-1)\subseteq (f,g_\mathfrak{D})$. Take $p^{n'}\geq \sharp(\Lambda/(f,g_\mathfrak{D}))$. Let
$$\mathcal{A}_{n,n'}^+:=\mathrm{Sel}_{\mathrm{cofree}}^+[\omega_n^+((1+T)^{p^{t+a}}-1)][p^{n'}].$$
Then by the divisibility of elements in $\mathrm{Sel}^+_{\mathrm{cofree}}$ for $x\in\mathcal{A}_{n,n'}$ and any $\tilde{n}\geq n,\tilde{n}'\geq n'$ we can find $y_{\tilde{n},\tilde{n}'}\in\mathrm{Sel}^+_{\mathrm{cofree}}[\omega_{\tilde{n}}((1+T)^{p^{t+a}}-1)][p^{\tilde{n}'}]$, such that $$\frac{\omega_{\tilde{n}}((1+T)^{p^{t+a}}-1)}{\omega_n^+((1+T)^{p^{t+a}}-1)}\cdot p^{\tilde{n}'-n'}\cdot y_{\tilde{n}, \tilde{n}'}=x_{n,n'}$$
and it is possible to make $y_{\tilde{n},\tilde{n}'}$'s a compatible projective system.\\

\noindent We discuss the local properties for these classes. Let $x_{\tilde{n},\tilde{n}'}=\omega_{\tilde{n}}^-((1+T)^{p^{t+a}}-1)\cdot y_{\tilde{n},\tilde{n}'}$. So $x_{\tilde{n},\tilde{n}'}\in\mathrm{Sel}_{\mathrm{cofree}}^+[\omega_{\tilde{n}}^+((1+T)^{p^{t+a}}-1)][p^{\tilde{n}'}]$. We claim that for $v|p$, $\mathrm{loc}_vx_{\tilde{n},\tilde{n}'}\in E^+(\mathcal{K})_{\tilde{n},v}/p^{\tilde{n}'}E^+(\mathcal{K}_{\tilde{n},v})$. This can be seen as follows. Recall we defined
$$\Lambda_m^+=\mathbb{Z}_p[[T]]/\omega_m^+((1+T)^{t+a}-1)$$
and proved $E^+(\mathcal{K}_{m,v})\simeq \Lambda_m^+$. Note that $\Lambda_m^+$ is a free $\mathbb{Z}_p$-module.
There are nature isomorphisms
$$(\varinjlim_{m'}\Lambda_{m'}^+)\otimes \frac{p^{-n'}\mathbb{Z}_p}{\mathbb{Z}_p}[\omega_m^+((1+T)^{p^{t+a}}-1)]\simeq \Lambda_m^+\otimes\frac{p^{-n'}\mathbb{Z}_p}{\mathbb{Z}_p}$$
The claim follows.\\

\noindent Now we claim that $\mathrm{loc}_vy_{\tilde{n},\tilde{n}'}\in H^1_+(\mathcal{K}_{\tilde{n},v}, T_{\tilde{n}'})$. Take $z\in E^+(\mathcal{K}_{\tilde{n},v})/p^{\tilde{n}'}E^+(\mathcal{K}_{\tilde{n},v})$. Then we know
$$\langle \omega_{\tilde{n}}^-((1+T)^{p^{t+a}}-1)y_{\tilde{n},\tilde{n}'}, z\rangle=0.$$
Since $\langle y_{\tilde{n},\tilde{n}'}, z\rangle_{\tilde{n}}=\mathrm{tr}_{\mathcal{K}_{\tilde{n},v}/\mathbb{Q}_p}\log_{\hat{E}} z\exp^*_{\omega_E}y_{\tilde{n},\tilde{n}'}$ by the description of Tate pairing in \cite[Page 5]{LZ}, we see that
$$\langle y_{\tilde{n},\tilde{n}'}, \omega^-_{\tilde{n}}((1+T)^{p^{t+a}}-1)z\rangle=0.$$
Notice that $\omega_{\tilde{n}}^+((1+T)^{p^a}-1)z=0$. Take $b$ such that $p^b\in(\omega^-_{\tilde{n}}((1+T)^{p^{t+a}}-1), \omega^+_{\tilde{n}}((1+T)^{p^{t+a}}-1))$. Then $\langle p^by_{\tilde{n},\tilde{n}'},z\rangle=0$.
Therefore
$$p^b\cdot\varprojlim_{\tilde{n}'}y_{\tilde{n},\tilde{n}'}\in H^1_+(\mathcal{K}_{\tilde{n},v}, T).$$
Then $\varprojlim_{\tilde{n}'}y_{\tilde{n},\tilde{n}'}\in H^1_+(\mathcal{K}_{\tilde{n},v}, T)$ since $\frac{H^1(\mathcal{K}_{\tilde{n},v}, T)}{H^1_+(\mathcal{K}_{\tilde{n},v}, T)}$ is torsion free.
In sum we have
$$\varprojlim_{\tilde{n},\tilde{n}'}y_{\tilde{n},\tilde{n}'}\in \varprojlim_{\tilde{n},\tilde{n}'}H_+^1(\mathcal{K}, \mathbf{T}_{\tilde{n},\tilde{n}'})=H_+^1(\mathcal{K},\mathbf{T})\simeq \Lambda.$$
On the other hand for any $y\in H^1_+(\mathcal{K}, \mathbf{T})$ let $y_{n,n'}$ be its image in $H^1_+(\mathcal{K}, T_{n,n'})$, then $\omega_n^-((1+T)^{p^{t+a}}-1)y_{n,n'}\in\mathrm{Sel}_{\mathrm{cofree}}^+$ by \cite[Lemma 1.3.8 (i)]{Rubin}. To see it is in the co-torsionfree part, first of all it is clearly $p$-power divisible. For any $g$ not a power of $p$, take an $\tilde{n}$ such that $\frac{\omega_{\tilde{n}}^+((1+T)^{p^{t+a}}-1)}{\omega_n^+((1+T)^{p^{t+a}}-1)}\in (p^{n'},g)$. Suppose $\frac{\omega_{\tilde{n}}^+((1+T)^{p^{t+a}}-1)}{\omega_n^+((1+T)^{p^{t+a}}-1)}\equiv w\cdot g(\mathrm{mod}\ p^{n'})$. Then $\omega_{\tilde{n}}^-((1+T)^{p^{t+a}}-1)y_{\tilde{n},n'}\cdot w\cdot g=\omega_n^-((1+T)^{p^{t+a}}-1)y_{n,n'}$. So $y_{n,n'}$  is $g$-divisible. We sum up the above discussion into
\begin{equation}\label{equation (3)}
\mathcal{A}_{n,n'}^+=\omega_n^-((1+T)^{p^{t+a}}-1)\mathrm{Im}(H^1_+(\mathcal{K}, \mathbf{T})\rightarrow H^1(\mathcal{K}, T_{n,n'}))
\end{equation}
Note that the kernel of $\mathrm{ker}(H^1_+(\mathcal{K},\mathbf{T})\rightarrow H^1_+(\mathcal{K}, T_{n,n'}))$ contains, but not necessarily equals $(\omega_n((1+T)^{p^{t+a}}-1),p^{n'})\cdot H^1_+(\mathcal{K}, \mathbf{T})$. Therefore we have a surjective map
\begin{equation}\label{equation (4)}
\Lambda_{n,n'}^+\simeq \omega^-_n((1+T)^{p^{t+a}}-1)\Lambda_{n,n'}\twoheadrightarrow \mathcal{A}_{n,n'}^+.
\end{equation}

\noindent We claim that $\frac{\sharp\{\mathrm{Sel}_{\mathrm{cofree}}^+[\omega_n^+((1+T)^{p^{t+a}}-1)][p^{n'}]\}}{\sharp(\Lambda_{n,n'}^+)}$
is a power of $p$ bounded independent of $m$. This can be seen as follows. By the structure theorem of $\Lambda$-modules the Pontryagin dual of $\mathrm{Sel}_{\mathrm{cofree}}^+$ can be embedded into $\Lambda$ with cokernel $M$ having finite cardinality. Then the claim follows by calculating $\mathrm{Tor}(M, \Lambda_{n,n'}^+)$ and showing that its cardinality is bounded by the square of the cardinality of $M$.
So if $\mathrm{ord}_Pf=d$, then $\sharp(\frac{\Lambda}{(f,\mathfrak{D})})\approx p^{md}$ where we use ``$\approx$'' to mean they differ by a power of $p$ bounded independent of $m$. So the kernel of (\ref{equation (4)}) has cardinality bounded independent of $m,n,n'$. We have by (\ref{equation (3)})
$$\sharp\mathrm{ker}\{\mathcal{A}_{n,n'}^+[\omega_n^+,\mathfrak{D}^\iota]\rightarrow \Lambda^*[\omega_n^+,\mathfrak{D}^\iota]\}\approx p^{md}$$
since the left hand side $\approx \mathrm{ker}\{\times f: \Lambda_{n,n'}^+[\mathfrak{D}^\iota]\rightarrow \Lambda_{n,n'}^+[\mathfrak{D}^\iota]\}$.
(Note that the Galois representation at $\mathbf{A}$ is obtained by the one for $\mathbf{T}$ composed with $\iota$).
On the other hand if $\mathrm{ord}_{P^\iota}(\mathrm{ker}\{\mathrm{Sel}_{\mathrm{cofree}}^+\rightarrow \Lambda^*\}^*)=d'$, then the
$$\sharp\mathrm{ker}\{\mathrm{Sel}_{\mathrm{cofree}}^+[\omega_n^+,\mathfrak{D}^\iota]\rightarrow \Lambda^*\}\approx p^{md'}.$$
Taking $m\rightarrow \infty$ we get $d=d'$ and the proposition for $P\not=(p)$.

If $P=(p)$ then we choose $\mathfrak{D}=(p+T^m)$ and argue similarly.
\end{proof}
Now we are ready to prove the main theorem in the introduction.
\begin{theorem}
There is a quasi-isomorphism $X^+\sim \Lambda \oplus X^+_{\mathrm{tor}}$. Moreover
\begin{itemize}
\item[(1)]
Under assumption (2) then for any height one prime $P$ of $\Lambda$,
$$\mathrm{lg}_PX_{\mathrm{tor}}^+=2\mathrm{lg}_P(\frac{H^1_+(\mathcal{K}, T\otimes\Lambda(\Psi))}{\Lambda\kappa_1^+}).$$
\item[(2)]
Under assumption (1) the above is true for all height one primes $P\not=(p)$
\end{itemize}
\end{theorem}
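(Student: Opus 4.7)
The plan is to establish the theorem by combining the upper bound on $X^+_{\mathrm{tor}}$ produced by Howard's Kolyvagin system machinery (set up in Section 2.3) with a matching lower bound extracted from the two-variable Iwasawa--Greenberg main conjecture of \cite{WAN2}, translated across by the explicit reciprocity law of Proposition \ref{3.1}. The structural decomposition $X^+\sim\Lambda\oplus X^+_{\mathrm{tor}}$ comes first from a rank count via the Poitou--Tate exact sequence for the pair $(\mathrm{str}^+,\mathcal{F}^+)$: combining $H^1_{\mathrm{str}}(\mathcal{K},\mathbf{T})=0$ and the torsion-ness of $X_{\mathrm{str}}$ just established with the rank-one freeness of $H^1_+(\mathcal{K},\mathbf{T})$ (Lemma \ref{Lemma 2.2} together with the non-torsion-ness of $\kappa_1^+$) and of each local $H^1_+(\mathcal{K}_w,\mathbf{T})$ at $w\mid p$, one reads off that $X^+$ has generic $\Lambda$-rank one, so the structure theorem yields the asserted pseudo-decomposition.

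The upper bound
\[
\mathrm{lg}_P X^+_{\mathrm{tor}} \leq 2\,\mathrm{lg}_P\!\left(H^1_+(\mathcal{K},T\otimes\Lambda(\Psi))/\Lambda\kappa_1^+\right)
\]
is what the $+$-Kolyvagin system $\kappa^+_S$ delivers directly, once Howard's axioms (H.0)--(H.5) have been verified for the $+$-Selmer structure; the only nontrivial axiom (H.4), the self-duality of $H^1_+(\mathbb{Q}_p,T_{\phi,m})$ under local Tate pairing, is supplied by the proposition preceding Section 2.3.

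For the matching lower bound, the three ingredients I would combine are: (i) the explicit reciprocity law $(\mathrm{LOG}^+(\kappa_1^+))^2=(\mathcal{L}^-_{f,\mathcal{K}})$ from Proposition \ref{3.1}; (ii) the divisibility $\mathrm{char}_{\Lambda\otimes\mathbb{Q}_p}(X^{\mathrm{anti}}_{f,\mathcal{K},v})\subseteq(\mathcal{L}^-_{f,\mathcal{K}})$ from the corollary following Proposition \ref{3.3}; and (iii) Proposition \ref{3.9} identifying $\exp_P(\mathrm{coker}\,j_{P^\iota,v})$ with $f_{v,P}$. Inputs (i) and (ii) together give $\mathrm{lg}_P(X^{\mathrm{anti}}_{f,\mathcal{K},v})\geq 2\,\mathrm{lg}_P(\mathrm{LOG}^+(\kappa_1^+))$, while the triangle
\[
\Lambda\kappa_1^+\;\hookrightarrow\;H^1_+(\mathcal{K},\mathbf{T})\;\xrightarrow{\mathrm{loc}_v}\;H^1_+(\mathcal{K}_v,\mathbf{T})\;\simeq\;\Lambda,
\]
whose overall composition is multiplication by $\mathrm{LOG}^+(\kappa_1^+)$, yields the additive identity $\mathrm{lg}_P(\mathrm{LOG}^+(\kappa_1^+))=\mathrm{lg}_P(H^1_+(\mathcal{K},\mathbf{T})/\Lambda\kappa_1^+)+f_{v,P}$. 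Chaining Poitou--Tate sequences through intermediate Selmer conditions (in particular exploiting $H^1_{\bar v}(\mathcal{K},\mathbf{T})=0$ and $H^1_+(\mathcal{K},\mathbf{T})\simeq H^1_{+,\mathrm{rel}}(\mathcal{K},\mathbf{T})$), together with two applications of Proposition \ref{3.9} (at $v$ and at $\bar v$), one converts the lower bound on $\mathrm{lg}_P(X^{\mathrm{anti}}_{f,\mathcal{K},v})$ into the desired $\mathrm{lg}_P(X^+_{\mathrm{tor}})\geq 2\,\mathrm{lg}_P(H^1_+/\Lambda\kappa_1^+)$: after doubling, the $2f_{v,P}$ coming from the bookkeeping identity cancels against the $2f_{v,P}$ arising from the local cokernels in the Poitou--Tate sequence, leaving exactly the required quantity.

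The main obstacle is this final bookkeeping: one must carefully propagate the divisibility across the four-term Poitou--Tate sequence, twist correctly by $\iota$ in passing from $\mathbf{T}$-statements to $\mathbf{A}$-statements (this is the role of $j_{P^\iota,v}$ rather than $j_{P,v}$ in Proposition \ref{3.9}), and account for the contributions at both $v$ and $\bar v$ so that they cancel against the expected terms coming from the triangle identity. Under assumption (1) the main conjecture input is only available as an inclusion of fractional ideals in $\Lambda\otimes\mathbb{Q}_p$, hence insensitive to $p$-power discrepancies, which is precisely why the statement excludes $P=(p)$ in that case.
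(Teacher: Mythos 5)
Your proposal follows essentially the same route as the paper: the upper bound via Howard's Kolyvagin system machinery for the $+$-Selmer structure, and the lower bound by combining the explicit reciprocity law (Proposition \ref{3.1}), the two-variable main conjecture divisibility specialized to the anticyclotomic line (Propositions \ref{3.2}, \ref{3.3}), and Proposition \ref{3.9}, propagated through Poitou--Tate exact sequences exactly as in Wan's Theorem 3.14 argument that the paper invokes. The bookkeeping you describe, including the $\iota$-twist in $j_{P^\iota,v}$ and the cancellation of the local terms at $v$ and $\bar v$, is the content of that cited argument, so your outline is consistent with the paper's proof.
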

\begin{proof}
We only need to prove ``$\geq$''. The proof is the same as the proof of \cite[Theorem 3.14]{WAN} using two Poitou-Tate long exact sequences: one taking $\mathcal{G}^*=\mathcal{F}_v^+$ and $\mathcal{F}^*$ to be our $\mathcal{F}^+$, the other one taking $\mathcal{F}^*$ to be $v$ and $\mathcal{G}^*=\mathcal{F}_v^+$.
Applying the results we proved before (Propositions \ref{3.1}, \ref{3.2}, \ref{3.3}, \ref{3.9}, in particular we use Proposition \ref{3.9} in place of \cite[Proposition 3.9]{WAN}) we get the theorem.
\end{proof}
Finally we give the proof of Corollary \ref{Corollary} in the introduction.
\begin{proof}
A large part of the argument is the same as the proof of \cite[Theorem 1.7]{WAN} using $\mathcal{F}^+$ in the place of $\mathcal{F}$ in \emph{loc.cit}, except that in the control theorem for the $\mathcal{F}^+$ Selmer group under specialization from $\mathcal{K}_\infty^-$ to $\mathcal{K}$ we have to replace the local argument at primes above $p$. However this is the same as \cite[Proposition 9.2, Theorem 9.3]{Kobayashi}, replacing the cyclotomic $\mathbb{Z}_p$-extension by anticyclotomic $\mathbb{Z}_p$-extension. The proof is identical to the argument in \cite{Kobayashi}.
\end{proof}

\textsc{Francesc Castella, Department of Mathematics, University of California, Los Angeles, CA, 90095-1555, USA}\\
\indent \textit{E-mail Address}: castella@math.ucla.edu\\

\noindent\textsc{Xin Wan, Department of Mathematics, Columbia University, New York, NY, 10027, USA}\\
\indent \textit{E-mail Address}: xw2295@math.columbia.edu

\end{document}